\theoremstyle{plain}
\newtheorem{corollary}{Corollary}
\newcommand{\N}{\mathbb{N}}
\newtheorem{lemma}{Lemma}
\newtheorem{proposition}{Proposition}
\newtheorem{remark}{Remark}
\newtheorem{theorem}{Theorem}
\newtheorem{defn}[theorem]{Definition}
\newcommand{\To}{\longrightarrow}
\def\Ext{\operatorname{Ext}}
\newcommand{\adef}{\begin{defn}}
\newcommand{\zdef}{\end{defn}}
\title[Extension and lifting of operators and polynomials]{Extension and lifting of operators\\ and polynomials}
\author{Jes\'us M.F. Castillo}
\address{Departamento de Matem\'aticas, Facultad de Ciencias, Univ. de Extremadura, Avda Elvas s/n 06011, Badajoz, Espa\~{n}a}
\email{castillo@unex.es}
\author{Ricardo Garc\'ia}
\address{Departamento de Matem\'aticas, Facultad de Ciencias, Univ. de Extremadura, Avda Elvas s/n 06011, Badajoz, Espa\~{n}a}
\email{rgarcia@unex.es}
\author{Jes\'us Su\'arez}
\address{Departamento de Matem\'aticas, Escuela Polit\'ecnica de C\'aceres, Univ. de Extremadura, Avda Universidad s/n 07001, C\'aceres, Espa\~{n}a}
\email{jesus@unex.es}
\thanks{This research has been supported in part by Project MTM2010-20190-C02-01 and Junta de Extremadura GR10113  ``IV Plan Regional I+D+i, Ayudas a Grupos de Investigaci\'on'' .}
\thanks{ 2010 {\it{Mathematics Subject Classification}}: 46B28, 46G20, 46M10,46A16.}
\thanks{Key words and phrases: Banach spaces, operator ideals, extension/lifting of operators, polynomials and holomorphic mappings,  short exact sequences of Banach spaces, exact functors.}
\begin{document}
\maketitle

\begin{abstract}
We study the problem of extension and lifting of operators
belonging to certain operator ideals, as well as that of their
associated polynomials and holomorphic functions. Our results
provide a characterization of $\mathcal{L}_1$ and
$\mathcal{L}_{\infty}$-spaces that includes and extends those of
Lindenstrauss-Rosenthal \cite{LR} using compact operators and
Gonz\'{a}lez-Guti\'{e}rrez \cite{GG} using compact polynomials. We
display several examples to show the difference between extending
and lifting compact (resp. weakly compact, unconditionally
convergent, separable and Rosenthal) operators to operators of the
same type. Finally, we show the previous results in a homological
perspective, which helps the interested reader to understand the
motivations and nature of the results presented.
\end{abstract}

\section{Introduction}

Many authors have considered the problem of extension and lifting
of operators \cite{dom,Fa,Ka,K,L, LR,LT, St}, of homogeneous polynomials
\cite{AB, CGV, CiG, D, GG} and holomorphic mappings \cite{A, AB,
CGV, D,Ka, Z} in Banach spaces. We present here a unifying method
of proof for most of those results, and several new ones, using
some tools of homological algebra.

To this end, recall that a short exact sequence of Banach spaces
and linear continuous operators is a diagram $$
\begin{CD}
0@>>>Y@>i>>X@>q>>Z@>>>0
 \end{CD}$$
where the image of each arrow coincides with the kernel of the
following one. The open mapping theorem ensures that $Y$ must then
be a subspace of $X$ ($i$ is an injection) and $Z$ is the
corresponding quotient $X/Y$  ($q$ is a quotient map). The exact
sequence is said to \emph{split} if $Y$ is complemented in $X$;
which means that there is a linear continuous projection $p: X\to
Y$. Let $\mathfrak L$ denote the class of all linear continuous
operators. The projection $p$ is, by definition, a linear
continuous extension of the identity of $Y$ and thus ``$Y$ is
complemented in $X$" is equivalent to ``for every Banach space $E$
every linear continuous operator $t\in \mathfrak L(Y, E)$ can be
extended to a linear continuous operator $T\in \mathfrak L(X, E)$
through $i$; i.e., $Ti=t$". It is part of the folklore --see
\cite[1.1]{CG}-- that this is equivalent to ``for every Banach
space $V$ every linear continuous operator $t\in \mathfrak L(V,
Z)$ can be lifted to a linear continuous operator $T\in \mathfrak
L(V, X)$ through $q$; i.e., $qT=t$".

We are interested in considering the situation when we replace the
class $\mathfrak L$ by another class $\mathcal A$ of operators,
polynomials or holomorphic mappings acting between Banach spaces.\\

\adef We will say that an exact sequence $0 \to Y \stackrel{i}\to
X \stackrel{q}\to Z \to 0 $ $\mathcal{A}$-splits if, for every
Banach space $E$, every $t \in \mathcal A(Y, E)$ can be extended
to a $T \in \mathcal{A}(Y,E)$ through $i$; i.e., $T i = t$. We
will say that it $\mathcal{A}$-lifts if, for every Banach space
$V$, every $t \in \mathcal A(V,Z)$ can be lifted to a $T \in
\mathcal{A}(V, X)$ through $q$; i.e., $qT = t$.\zdef

We will say that an exact sequence $0 \to Y \stackrel{i}\to X
\stackrel{q}\to Z \to 0 $ \emph{uniformly $\mathcal{A}$-splits}
when there exists $\lambda>0$ such that, for every Banach space
$V$, every $t \in \mathcal A(Y, V)$ can be extended to a $T \in
\mathcal{A}(Y,V)$ through $i$ with $\|T\|\leq \lambda \|t\|$.
Analogously, we will say that it \emph{uniformly
$\mathcal{A}$-lifts} if there exists $\lambda>0$ such that, for
every Banach space $V$, every $t \in \mathcal A(V,Z)$ can be
lifted to a $T \in \mathcal{A}(V, X)$ through $q$ with
$\|T\|\leq \lambda \|t\|$.\\

Observe that when $\mathcal A$ is a closed ideal of $\mathfrak L$
then $\mathcal A$-splitting and uniform $\mathcal A$-splitting
coincide; as well as $\mathcal A$-lifting and uniform $\mathcal
A$-lifting. This obviously fails for non-closed ideals: for
instance, when $\mathfrak F$ is the ideal of finite rank
operators, all exact sequences $\mathfrak F$-split while, as it
will be clear after
definition 2, not all exact sequences uniformly $\mathfrak F$-split.\\

In this paper we study when an exact sequence $\mathcal{A}$-splits
or $\mathcal{A}$-lifts for the following choices of $\mathcal{A}$:
the ideal $\mathfrak{F}$ of finite rank operators, $\texttt{A}$ of
approximable operators, $\mathfrak K$ of compact operators,
$\mathfrak W$ of weakly compact operators and $\mathfrak L$ of all
linear continuous operators; for their associated polynomial
ideals $\mathcal{P}_{\mathfrak K}$ of compact polynomials and
$\mathcal{P}_{\mathfrak W}$ of weakly compact polynomials; and for
their associated classes of holomorphic bounded mappings
$\mathcal{H}_{\mathfrak K}$ and
$\mathcal{H}_{\mathfrak W}$ (see sections 3 and 4).\\

The paper is organized as follows: Section 3 contains the main
results about extension and lifting of operators. Given an exact
sequence $0 \to Y \stackrel{i}\to X \stackrel{q}\to Z \to 0$ of
Banach spaces, the Hahn-Banach theorem  guarantees  that the
restriction operator $i^*:X^*\to Y^*$ ($i^*(x^*)=x^*|_{Y}$) is
surjective. We show that the existence of a linear continuous
operator $s:Y^*\to X^*$ such that $i^*s=Id_{Y^*}$ is equivalent to
any of the following conditions (see Theorem \ref{locallysplit}):
the sequence locally splits (see Definition 2), uniformly
${\mathfrak F}$-splits, $\texttt{A}$-splits, ${\mathfrak
K}$-splits, $ {\mathfrak W}$-splits, uniformly ${\mathfrak
F}$-litfs or $\texttt{A}$-lifts. If moreover $Z$ has the Bounded
Approximation Property (in short, BAP), the previous conditions
will be shown to be also equivalent to $\mathfrak K$-lifts. This
result unifies and extends Kaballo \cite[Thm. 3.4]{Ka} and
Fakhoury \cite[Thm. 3.1]{Fa}. It is then proved that $\mathfrak
K$-splitting and $\mathfrak K$-lifting re not equivalent. The BAP
has a decisive role in the lifting of operators. In Proposition
\ref{bap} it is proved: \textit{A separable Banach space $Z$ has
the BAP  if and only if for every exact
sequence $0\rightarrow Y \rightarrow X \rightarrow Z \to 0$ $\mathfrak K$-spliting and ${\mathfrak{K}}$-litfing are equivalent}.\\

Equipped with these results and the representation of polynomials
by symmetric tensor products, section 4 is devoted to prove
extension/lifting results for polynomials and holomorphic bounded
mappings. The injective tensor product
$\check{\otimes}_{\varepsilon }$ (resp. projective $\hat{\otimes
}_{\pi  }$) has deep connections with vector functions spaces
since it often occurs that injective (resp. projective) tensor
product spaces can be represented as vector function spaces (see
\cite{B,Ka,DF,F}). We will prove that the notions locally
splitting, $\mathcal{P}_{\texttt{A}}$-splitting,
$\mathcal{P}_{\mathfrak K}$-splitting, $\mathcal{P}_{\mathfrak
W}$-splitting, $\mathcal{H}_{\mathfrak K}$-splitting,
$\mathcal{H}_{\mathfrak W}$-splitting and
$\mathcal{P}_{\texttt{A}}$-lifting are all equivalent. If moreover
$Z$ has the BAP, the previous conditions are also equivalent to
$\mathcal{P}_{\mathfrak{K}}$-lifting and
$\mathcal{H}_{\mathfrak{K}}$-lifting (see Theorem \ref{ls}). In
Section 5 we  provide  characterizations of $\mathcal{L}_1$ and
$\mathcal{L}_{\infty}$-spaces that include and extend those of
Lindenstrauss-Rosenthal \cite[Thm. 4.1]{LR} using compact
operators and Gonz\'{a}lez-Guti\'{e}rrez \cite[Thms.  2  and 4]{GG} using compact
polynomials and Domanski \cite[Thm. 4]{dom}.\\

Section 6 contains the most interesting counterexamples in the
paper. Its motivation is to make explicit the difference between
``extending operators belonging to a certain operator ideal to
operators of the same operator ideal" and the same property for
liftings. This problem responds to a very natural homological
problem as we explain in Section 7. The connection between the
extension/lifting problem for operator ideals has appeared, often
not explicitly, in many papers (see e.g., \cite{LR,dom}). We will
call an operator ideal $\mathfrak A$ \emph{balanced} when
extension and lifting are equivalent for operators in $\mathfrak
A$. For technical reasons explained in Section 6, this notion is
only meaningful for injective and surjective (see below for the
definition) operator ideals. We will then study the balanced
character of the main classes of injective and surjective operator
ideals appearing in the literature: compact, weakly compact,
unconditionally convergent, separable and Rosenthal operators. The
ideal of compact operator is the only one balanced.

Section 7 puts previous results in a homological perspective,
which helps the interested reader to understand the motivations
and nature of the results presented.

\section{Preliminaries}

For general information about operator ideals we suggest \cite{Pi}
or, more friendly, \cite{djt}. Recall that an operator ideal
$\mathcal A$ is a subclass of the class $\mathfrak L$ such that
for all Banach spaces $V$ and $X$ its components $\mathcal A(V,X )
= \mathfrak L(V, X ) \cap \mathcal A$ satisfy: $\mathcal A(V,X )$
is a linear subspace of $\mathfrak L(V, X )$ which contains the
finite rank operators and enjoys the ideal property: for $u\in
 \mathfrak L(V, X ), t\in \mathcal A(X,W), w\in
 \mathfrak L(W, Y )$, the composition $wtu \in
\mathcal A(V,Y)$. An operator ideal  $\mathcal A$ is injective
(resp. surjective) whenever given an operator $t\in  \mathfrak
L(V, Y )$ and an injection $i:Y\to X$ (resp. surjection $q:X \to
Z$) then $t\in \mathcal A(V,Y)$ if and only if $it \in \mathcal
A(V,X)$ (resp. $t\in \mathcal A(Z,V)$ if and only if  $tq\in
\mathcal A(X,V)$)).

All necessary background information and unexplained notation
about polynomials and holomorphic mappings can be found in
\cite{D}; more specific information about homogeneous polynomial
ideals can be seen in \cite{Pi1,BPR}. For information about
homological algebra  we address the reader to  \cite{HS}, while a
sounder background on the theory of exact sequence of Banach
spaces can be found in \cite{CG}. Recall that a Banach space $X$
is said to have the Bounded Approximation Property (in short, BAP)
if there is a constant $C>0$ such that for every $\epsilon >0$ and
each compact subset $K$  of $X$, there is a finite rank operator
$T$ in $X$ with $\| T \| <C$, such that  $\| Tx-x \| \leq
\epsilon$, for each $x\in K$ .\\

\textbf{The push-out construction.} Let us recall the push-out
construction from the theory of exact sequences. Given two
operators $ i: Y\rightarrow X$ and $j: Y\rightarrow E$ their
push-out is the space $PO = (E \times X )/\Delta$, where $\Delta
=[ \overline{{(jy, -iy) : y \in Y}} ]$ endowed with the quotient
topology. Let $v: E \to PO$ and $w: X \to PO$ be the operators
$v(m)= [(m,0)]$ and $v(x)= [(0,x)]$. One has $vj=wi$. Moreover,
given any Banach space $W$ and operators $a: E \to W$ and $b: X
\to W$ so that $aj=bi$ there is a unique operator $u: PO \to W$
given by $u[m,x] = am + bx$ such that $uv=a$, $uw=b$ and
$\|u\|\leq \max \{\|a\|,\|b\|\}$. In particular, this means (see
\cite[1.2, 1.3]{CG}) that given an exact sequence $0 \to Y
\stackrel{i}\to X \to Z \to 0$ and an operator $j: Y \to E$ the
push-out of $i,j$ yields a commutative diagram

$$
\begin{CD}
0@>>>Y@>i>>X@>>>Z@>>>0\\
 &&@VjVV @VVwV \Vert\\
0@>>>E@>>v>PO@>>> Z@>>>0.
 \end{CD}$$

The following lemma (see \cite[1.3]{CG}) establishes the fundamental
connection of the push-out construction regarding
extension/lifting problems:

\begin{lemma}\label{critopo} In a push-out diagram,
$$
\begin{CD}
0@>>>Y@>i>>X@>>>Z@>>>0\\
 &&@VjVV @VVwV \Vert\\
0@>>>E@>>v>PO@>>> Z@>>>0.
 \end{CD}$$
the lower sequence splits if and only if $j$ extends to $X$; that
is, there is an operator $T:X\to E$ such that $Ti=j$.
\end{lemma}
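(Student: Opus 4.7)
The plan is to use the universal property of the push-out that was spelled out in the paragraph immediately preceding the lemma. Both directions follow by choosing appropriate data to feed into that universal property, with no real analysis required beyond verifying compatibility relations.

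For the forward direction, suppose the lower row splits; that is, there exists a continuous linear projection $p : PO \to E$ with $pv = \mathrm{Id}_E$. I would define $T := pw : X \to E$ and verify
\[
Ti = p w i = p v j = \mathrm{Id}_E \circ j = j,
\]
using the commutativity relation $wi = vj$ that is built into the push-out. Thus $T$ is the desired extension of $j$ through $i$.

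For the backward direction, suppose an extension $T : X \to E$ with $Ti = j$ is given. I would then invoke the universal property of $PO$ with the data $a := \mathrm{Id}_E : E \to E$ and $b := T : X \to E$. The compatibility $aj = bi$ needed to apply that property is precisely the hypothesis, since $aj = j = Ti = bi$. Hence there exists a (unique) continuous linear operator $u : PO \to E$ satisfying $uv = \mathrm{Id}_E$ and $uw = T$. The first of these identities exhibits $u$ as a left inverse of $v$, so the lower exact sequence splits.

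The only step that is not completely mechanical is recognizing the correct pair $(a,b)$ to feed into the universal property in the backward direction; once $a = \mathrm{Id}_E$ and $b = T$ are chosen, the compatibility relation and the resulting retraction $u$ fall out automatically. I do not anticipate any genuine obstacle, and the argument is essentially categorical: it expresses the fact that in a push-out square, the map opposite to $j$ (namely $v$) is a split mono precisely when $j$ admits a retraction-like completion along $i$.
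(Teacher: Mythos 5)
Your proof is correct and is exactly the standard universal-property argument; the paper itself does not prove this lemma but simply cites \cite[1.3]{CG}, and your two directions (composing a retraction $p$ of $v$ with $w$ to get $T=pw$, and feeding $a=\mathrm{Id}_E$, $b=T$ into the push-out's universal property to get a left inverse $u$ of $v$) are the intended proof. The only cosmetic point worth noting is that the paper's description of the push-out contains a typo ($v(x)=[(0,x)]$ should read $w(x)=[(0,x)]$), which you have implicitly corrected.
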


\section{Locally splitting vs. $\mathcal{A}$-splitting}

Kalton introduced in \cite{K} the notion of locally splitting for
an exact sequence as follows (see also Fakhoury \cite{Fa}):

\adef An exact sequence $0\rightarrow Y\overset{i}{%
\rightarrow }X\overset{q}{\rightarrow }Z\rightarrow 0$ is said to
locally split if there exists a constant $\lambda>0$ such that for
every finite dimensional subspace $E \subset X$ there exists a
linear continuous operator $T_{E}:E \to Y$ such that
${T_E}_{|E\cap Y}=id_{E\cap Y}$ and  $\|T_{E}\|\leq \lambda$. We will also say that $Y$ is locally complemented in $X$ when the corresponding exact sequence $0\rightarrow Y\overset{i}{%
\rightarrow }X\overset{q}{\rightarrow }X/Y\rightarrow 0$ locally
splits.  \zdef

The Principle of Local Reflexivity of Lindenstrauss and Rosenthal
\cite[Thm. 3.1.]{LR}, says that every Banach space is locally
complemented in its bidual. Also, it is well known that every
Banach space is locally complemented in its ultrapowers. Other
results about local complementation are the following:

If $Y$ is an $\mathcal{L}_{\infty }-$space and $Y\subset X$ (or
$X/Y$ is an $\mathcal{L}_{1}-$space) then $Y$ is locally
complemented in $X$ (see \cite{K}).

Thus, local splitting corresponds to uniform $\mathfrak
F$-splitting. One moreover has:

\begin{theorem}\label{locallysplit}(Mainly Kalton-Fakhoury) Let $0\rightarrow Y\rightarrow X \rightarrow Z\rightarrow 0$
be an exact sequence. The following are equivalent:
\begin{itemize}
\item[(1)]  The sequence locally splits.
 \item[(2)]  The sequence uniformly ${\mathfrak F}$-splits.
  \item[(3)]  The sequence uniformly ${\mathfrak F}$-litfs.
  \item[(4)]  The sequence ${\mathfrak K}$-splits. Equivalently,
  uniformly ${\mathfrak K}$-splits.
\item[(5)]  The sequence $ {\mathfrak W}$-splits. Equivalently,
uniformly ${\mathfrak W}$-splits \item[(6)] The sequence
$\texttt{A}$-splits. Equivalently, uniformly $\texttt{A}$-splits.
\item[(7)] The sequence $\texttt{A}$-lifts. Equivalently,
uniformly $\texttt{A}$-lifts.

If, moreover, $Z$ has the BAP then assertions (1) to (7)  are also
equivalent to
 \item[(8)] The  sequence $\mathfrak{K}$-lifts.  Equivalently,
  uniformly ${\mathfrak K}$-lifts.
\end{itemize}
\end{theorem}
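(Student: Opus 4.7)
The plan is to pivot every equivalence through the auxiliary condition $(\ast)$: there exists a bounded linear $s\colon Y^{*}\to X^{*}$ with $i^{*}s=\mathrm{id}_{Y^{*}}$ (equivalently, a left inverse $r\colon X^{*}\to Z^{*}$ of $q^{*}$). The equivalence $(1)\Leftrightarrow(\ast)$ is the classical Kalton--Fakhoury local splitting criterion, which I would invoke; the work then reduces to showing that each of $(2)$--$(7)$ is equivalent to $(\ast)$, and that under BAP of $Z$ so is $(8)$.

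For $(\ast)\Rightarrow(2),(4),(5),(6)$ I would give a single unified construction: for $t\in\mathcal{A}(Y,V)$ with $\mathcal{A}\in\{\mathfrak{F},\texttt{A},\mathfrak{K},\mathfrak{W}\}$, set $T:=t^{**}\circ s^{*}|_{X}\colon X\to V^{**}$. A short calculation using $i^{*}s=\mathrm{id}_{Y^{*}}$ shows $s^{*}(\widehat{y})=\widehat{y}$ for $y\in Y$, whence $Ti=t$ modulo the canonical embedding $V\hookrightarrow V^{**}$ and $\|T\|\le\|s\|\|t\|$. That $T(X)\subset V$ is automatic for compact or finite-rank $t$, follows from Gantmacher's theorem for weakly compact $t$, and follows by norm-continuity of $t\mapsto t^{**}s^{*}|_{X}$ for approximable $t$. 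For the converse direction, note that any extension of a finite-rank $t\colon Y\to V$ into a finite-dimensional target is automatically finite rank, so $(4),(5),(6)$ each upgrade to uniform $\mathfrak{F}$-splitting. Then for each finite-dimensional $F\subset Y^{*}$, the evaluation map $\rho_{F}\colon Y\to F^{*}$ defined by $(\rho_{F}y)(y^{*})=y^{*}(y)$ is a finite-rank contraction; extending it to $T_{F}\colon X\to F^{*}$ of norm $\le\lambda$ and taking adjoints (using $F^{**}=F$) yields a local section $s_{F}\colon F\to X^{*}$ with $i^{*}s_{F}=\mathrm{id}_{F}$ and $\|s_{F}\|\le\lambda$. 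A weak-$\ast$ ultrafilter limit along the directed set of finite-dimensional subspaces of $Y^{*}$ assembles the $s_{F}$ into the desired global $s$; linearity is forced because for any $y_{1}^{*},y_{2}^{*}\in Y^{*}$ the subspaces $F\supset\{y_{1}^{*},y_{2}^{*}\}$ are cofinal, so the linear relations satisfied by each $s_{F}$ on this cofinal set pass to the limit.

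The lifting equivalences $(\ast)\Leftrightarrow(3),(7)$ are handled symmetrically, using the Principle of Local Reflexivity. For $(\ast)\Rightarrow(3)$, given a finite-rank $t\colon V\to Z$ with image $F:=t(V)$, consider $G:=r^{*}(F)\subset X^{**}$ and apply local reflexivity on $G$ together with the finite-dimensional set of functionals $q^{*}F^{*}\subset X^{*}$, obtaining $J\colon G\to X$ with $\|J\|\le 1+\varepsilon$ and $q(Jg)=q^{**}g$ for $g\in G$. Then $T:=J\circ r^{*}|_{F}\circ\bar{t}\colon V\to X$ (where $\bar{t}\colon V\to F$ is the corestriction of $t$) is a finite-rank lift with $qT=t$ and $\|T\|\le(1+\varepsilon)\|r\|\|t\|$. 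The approximable case $(7)$ follows by telescoping: write $t=\lim t_{n}$ with $t_{n}$ finite rank, extract a rapidly convergent subsequence, and sum the finite-rank lifts of $t_{n_{0}}$ and of the consecutive differences $t_{n+1}-t_{n}$. For the converse directions $(3),(7)\Rightarrow(1)$ I would argue directly: for finite-dimensional $E\subset X$, the image $E':=q(E)\subset Z$ is finite dimensional and $j_{E'}\colon E'\hookrightarrow Z$ is finite rank (in particular approximable); lifting $j_{E'}$ to $S\colon E'\to X$ with $qS=j_{E'}$ and $\|S\|\le\lambda$, the operator $P_{E}(e):=e-Sq(e)$ maps $E$ into $Y$, is the identity on $E\cap Y$, and has norm $\le 1+\lambda\|q\|$, yielding uniform local splitting.

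Finally, for $(1)\Rightarrow(8)$ under BAP of $Z$, the BAP furnishes finite-rank $R_{n}\colon Z\to Z$ with $\|R_{n}\|\le C$ converging to $\mathrm{id}_{Z}$ uniformly on compacta; for compact $t\colon V\to Z$, the operators $R_{n}t$ are finite rank and $R_{n}t\to t$ in operator norm. Extracting a subsequence with $\|R_{n+1}t-R_{n}t\|<2^{-n}$, I would lift $R_{n_{0}}t$ to $T_{0}$ and each telescopic difference to $U_{n}$ with $\|U_{n}\|\le\lambda\cdot 2^{-n}$ by $(3)$, and form the absolutely convergent series $T:=T_{0}+\sum U_{n}$, which is an operator $V\to X$ satisfying $qT=t$ and is a norm limit of finite-rank operators, hence compact. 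The main obstacles I anticipate are the verification of linearity in the weak-$\ast$ ultrafilter limit used in $(2)\Rightarrow(\ast)$ and its dual, handled by the cofinality argument above, and the careful control of the telescoping lifts in $(1)\Rightarrow(8)$, which crucially depends on the uniform lifting constant provided by $(3)$.
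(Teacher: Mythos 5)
Your overall architecture is sound and genuinely different from the paper's: the paper simply cites Kalton and Fakhoury for the equivalences (1)--(4) and for ``locally splits iff the dual sequence splits'', derives (5) from a projection $p:X^{**}\to Y^{**}$ exactly as your unified $t^{**}\circ s^{*}|_{X}$ construction does, gets (6) from closedness of $\texttt{A}$, and proves $(2)\Rightarrow(7)$ by a push-out/completion argument showing that the topologically exact sequence of $\mathfrak F(V,\cdot)$-spaces stays exact after completion; your telescoping-series treatment of (7) and (8) is an elementary and perfectly valid substitute for that, and your $(3)\Rightarrow(1)$ via $P_{E}(e)=e-Sq(e)$ is clean. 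The problems are in the two places where you chose to reprove Kalton's theorem rather than quote it. First, in $(\ast)\Rightarrow(3)$ the Principle of Local Reflexivity, applied with the finite-dimensional set of functionals $q^{*}F^{*}$, only gives $\langle Jg,q^{*}\phi\rangle=\langle g,q^{*}\phi\rangle$ for finitely many $\phi$, i.e.\ $\phi\bigl(q(Jg)\bigr)=\phi\bigl(q^{**}g\bigr)$ for $\phi$ in a finite-dimensional subspace of $Z^{*}$. Since $q(Jg)$ is an arbitrary element of $Z$ (not known to lie in $F$), these finitely many functionals do not separate it from $q^{**}g$, so the asserted identity $q(Jg)=q^{**}g$ does not follow. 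To repair this you need either the subspace-respecting version of local reflexivity (an operator $J$ with $J(G\cap Y^{\perp\perp})\subset Y$), enlarging $G$ to contain actual preimages $x_{i}\in X$ of a basis of $F$ so that $J(r^{*}f_{i})=x_{i}+J(r^{*}f_{i}-x_{i})\in x_{i}+Y$, or simply to invoke Kalton's Theorem 3.5 as the paper does.

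Second, your passage from (4), (5), (6) back to \emph{uniform} $\mathfrak F$-splitting is asserted but not proved: the observation that extensions into finite-dimensional targets are automatically finite rank gives you qualitative $\mathfrak F$-splitting, but the constant $\lambda$ needed for the operators $T_{F}:X\to F^{*}$ in your ultrafilter construction requires a separate argument (e.g.\ a gliding-hump: if no uniform constant existed, assemble badly extendable finite-rank $t_{n}:Y\to F_{n}^{*}$ into a single compact operator $\bigoplus 2^{-n}t_{n}$ into a $c_{0}$-sum and derive a contradiction). The paper hides the same issue behind the remark that splitting and uniform splitting coincide for closed ideals, so this is a shared elision, but in your write-up the uniform constant is load-bearing for the very next step and should be justified. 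With these two repairs the proposal is a correct, more self-contained alternative to the paper's proof.
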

\begin{proof}  The equivalences (1) to (4) were proved by Kalton \cite{K} (and by Fakhoury \cite[III]{Fa} with other terminology). Kalton also shows
\cite[Thm. 3.5]{K} that a sequence locally splits if and only if
its dual sequence splits. From this (5) can be easily derived: on
one hand, it is clear that $ {\mathfrak W}$-splitting implies $
{\mathfrak K}$-splitting; on the other hand, if the sequence
$0\rightarrow Y\stackrel{i}\rightarrow X \rightarrow Z\rightarrow
0$ locally splits, its dual sequence splits, hence also its bidual
sequence $0\rightarrow Y^{**}\stackrel{i^{**}}\rightarrow X^{**}
\rightarrow Z^{**}\rightarrow 0$ splits. Let $p: X^{**}\to Y^{**}$
be a linear continuous projection. If $t: Y\to E$ is a weakly
compact operator, its bidual $t^{**}$ still is weakly compact and
$E$-valued. Thus, $t^{**}p: X^{**}\to E$ is a weakly compact
extension of $t$ and the sequence $\mathfrak W$-splits (see also
\cite[3.1]{Fa}). The equivalence with (6) is also simple: since
every approximable operator is compact, (4) implies (6). And since
$\texttt{A}$ is closed, $\texttt{A}$-splitting implies uniform
$\texttt{A}$-splitting, hence uniform $\mathfrak {F}$-splitting,
which is (2).

Observe now that $\mathfrak K$-lifting is not in the list, so the
equivalence with (7) is new and has to be proved. That (7) implies
(2) is clear, so we show that (2) implies (7). Assertion (7)
exactly amounts showing that given any Banach space $V$ the
induced sequence
$$
\begin{CD}
0@>>>\texttt{A}(V,Y)@>i_{\circ}>>\texttt{A}(V,X)@>q_{\circ
}>>\texttt{A}(V,Z)@>>>0,
\end{CD}$$
in which $i_{\circ}$ (resp. $ q_{\circ}$) denote the operators
``left-composition with $i$" (resp. with $q$), is also exact.
Thus, assume (2). That the sequence uniformly $\mathfrak F$-lifts
implies that, for each Banach space $V$, the sequence of normed
spaces
$$
%\begin{equation}
\begin{CD}
0@>>>\mathfrak{F}(V,Y)@>i_{\circ}>>\mathfrak{F}(V,X)@>q_{\circ }>>\mathfrak{F}(V,Z)@>>>0  \\
\end{CD}
$$
%\end{equation}
is topologically exact (i.e.; $q_{\circ}$ is an open map). We show
that the sequence formed by the completion of those spaces,
namely,
$$
\begin{CD}
0@>>>\texttt{A}(V,Y)@>>>\texttt{A}(V,X)@>>>\texttt{A}(V,Z)@>>>0\\
\end{CD}$$
is also an exact sequence. Let $j: \mathfrak F(V,Y) \to
\texttt{A}(V,Y)$ denote the canonical embedding. Making the
push-out of the couple $j, i_{\circ}$ one gets the commutative
diagram
$$
\begin{CD}
0@>>>\mathfrak{F}(V,Y)@>i_{\circ}>>\mathfrak{F}(V,X)@>q_{\circ}>>\mathfrak{F}(V,Z)@>>>0\\
 &&@VjVV @VVV \Vert\\
0@>>>\texttt{A}(V,Y)@>>>PO@>>> \mathfrak{F}(V,Z)@>>>0.\\
 \end{CD}$$

The universal property of the push-out applied to the operators
$i_\circ: \texttt{A}(V,Y) \to \texttt{A}(V,X)$ and $j:
\mathfrak{F}(V,X) \to \texttt{A}(V,X)$ (the canonical embedding)
yields the existence of the operator $u: PO \to \texttt{A}(V,X)$
given by $u[a,F]= i_\circ a + F$ and a commutative diagram
$$
\begin{CD}
0@>>>\mathfrak{F}(V,Y)@>i_{\circ}>>\mathfrak{F}(V,X)@>q_{\circ}>>\mathfrak{F}(V,Z)@>>>0\\
 &&@VjVV @VVwV \Vert\\
0@>>>\texttt{A}(V,Y)@>>v>PO@>>> \mathfrak{F}(V,Z)@>>>0\\
 &&\Vert && @VVuV @VV{\overline {u}}V \\
0@>>>\texttt{A}(V,Y)@>>{i_\circ}> \texttt{A}(V,Y) @>>>
\diamondsuit@>>>0.
 \end{CD}$$
The operator $u$ has dense range since $uw$ is the canonical
dense-range embedding $j: \mathfrak{F}(V,X) \to \texttt{A}(V,X)$;
so $\overline {u}$ must also have dense range. And since
$\diamondsuit$ is complete, it
must be the completion $\texttt{A}(V,Z)$ of $\mathfrak{F}(V,Z)$.\\

The equivalence with (8) follows from this since when  $Z$ has the
BAP then for every Banach space $V$ one has the identity
$\mathfrak K(V, Z) = \texttt{A}(V, Z)$.
\end{proof}

The following proposition explains why $\mathfrak K$-lifting is
not in the list of Theorem \ref{locallysplit}.

\begin{proposition}\label{bap} A separable Banach space $Z$ has the BAP  if and only if for every exact
sequence $0\rightarrow Y \rightarrow X \rightarrow Z \to 0$
$\mathfrak K$-spliting and ${\mathfrak{K}}$-litfing are
equivalent.
\end{proposition}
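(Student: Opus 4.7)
The forward direction is an immediate consequence of \thmref{locallysplit}: when $Z$ has the BAP, conditions (4) and (8) of that theorem lie in the same equivalence class, so $\mathfrak{K}$-splitting and $\mathfrak{K}$-lifting coincide for every exact sequence ending at $Z$.

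For the converse I would argue by contrapositive: assume $Z$ is separable and lacks BAP, and exhibit an exact sequence where the two notions differ. My plan is to use an $\ell_1$-presentation $\mathcal{E}\colon 0\to Y\to \ell_1\to Z\to 0$ (with $\ell_1=\ell_1(\N)$, obtained from a countable dense subset of $B_Z$) chosen so that $\mathcal{E}$ locally splits, and then to show that $\mathcal{E}$ cannot $\mathfrak{K}$-lift; the assumed equivalence would then be violated.

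The heart of the argument is that $\mathcal{E}$ cannot $\mathfrak{K}$-lift. Assume it does, with uniform constant $\lambda$. For any compact $t\in\mathfrak{K}(V,Z)$, pick a compact lift $T\in\mathfrak{K}(V,\ell_1)$ with $\|T\|\leq \lambda\|t\|$. The canonical coordinate projections $P_n$ of $\ell_1$ are equicontinuous, $P_n x\to x$ pointwise, and $\|P_n\|=1$; by compactness of $T$ this upgrades to $P_nT\to T$ in operator norm, with $\|P_nT\|\leq \|T\|$. Hence $qP_nT$ gives a norm-convergent finite-rank approximation of $t$ satisfying $\|qP_nT\|\leq \|q\|\lambda\|t\|$. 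A standard Grothendieck-style characterization, applicable because $Z$ is separable (testing with compact operators $c_0\to Z$ arising from null sequences dense in $B_Z$, and assembling the bounded finite-rank approximations via a Bartle--Graves-type selection), identifies such uniform-norm approximability of every compact operator into $Z$ with the BAP of $Z$. This contradicts our hypothesis.

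The main obstacle is arranging that $\mathcal{E}$ does locally split: an arbitrary $\ell_1$-quotient need not, since local splitting amounts to $Z^*$ being complemented in $\ell_\infty$, which typically fails. I would handle this either by choosing the presentation so that the kernel $Y$ is an $\mathcal{L}_1$-subspace of $\ell_1$ (hence automatically locally complemented by Kalton's results recalled just before \thmref{locallysplit}), or by replacing $\mathcal{E}$ with a push-out along a carefully chosen locally complementing embedding that both forces local splitting and preserves the failure of $\mathfrak{K}$-lifting established in the previous paragraph. With such a locally-splitting yet $\mathfrak{K}$-lifting-free sequence in hand, the assumed equivalence collapses and $Z$ must have the BAP.
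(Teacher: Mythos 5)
Your forward direction is fine and is exactly how the paper reads it off from Theorem~\ref{locallysplit}. The converse, however, has a genuine gap, and it sits precisely where you locate ``the main obstacle'': an $\ell_1$-presentation $0\to Y\to\ell_1\to Z\to 0$ can \emph{never} be made to locally split when $Z$ fails the BAP. By Kalton's duality (invoked in the proof of Theorem~\ref{locallysplit}), local splitting of that sequence is equivalent to the splitting of its dual $0\to Z^*\to\ell_\infty\to Y^*\to 0$, i.e.\ to $Z^*$ being injective, i.e.\ to $Z$ being an $\mathcal L_1$-space --- and $\mathcal L_1$-spaces have the BAP. Neither of your two repairs survives this: Kalton's results give local complementation when the \emph{subspace} is $\mathcal L_\infty$ or the \emph{quotient} is $\mathcal L_1$, not when the subspace is an $\mathcal L_1$-space; and pushing out along an embedding of $Y$ into an injective ($\ell_\infty(\Lambda)$-type) space makes the new sequence split outright, so it trivially $\mathfrak K$-lifts and the counterexample evaporates (nor is ``failure of $\mathfrak K$-lifting preserved under push-out'' something you can assert without argument). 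Since by Theorem~\ref{locallysplit} $\mathfrak K$-splitting \emph{is} local splitting, your $\ell_1$-presentation fails both properties simultaneously and so witnesses nothing against the assumed equivalence. Your ``heart of the argument'' paragraph is essentially sound as a statement about $\ell_1$-presentations, but it attacks the easy half of the problem; the hard part is producing a sequence over $Z$ that does $\mathfrak K$-split.

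The idea you are missing is the paper's choice of middle space: by the James--Lindenstrauss construction there is a separable $X$ such that $X$ and $X^{**}$ have bases and $X^{**}/X\cong Z$. The sequence $0\to X\to X^{**}\to Z\to 0$ locally splits by the Principle of Local Reflexivity, hence $\mathfrak K$-splits; but if $Z$ fails the BAP there is a compact non-approximable $K\in\mathfrak K(Z,Z)$, and any compact lifting $\widetilde K\colon Z\to X^{**}$ would be approximable (since $X^{**}$ has a basis), forcing $K=q\widetilde K$ to be approximable --- a contradiction. So that sequence $\mathfrak K$-splits but does not $\mathfrak K$-lift, which is exactly the contrapositive you were aiming for.
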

\begin{proof} Recall that James and Lindenstrauss \cite[1.d.3]{LT} proved that if $Z$ is a
separable Banach space there exists a separable Banach space $X$
such that both $X$ and $X^{\ast \ast }$ have basis and moreover
$X^{**}/X= Z$. So there is an exact sequence
$$
\begin{CD}
0@>>>X@>>>X^{\ast \ast }@>>>Z@>>>0\\
\end{CD}
$$ which locally splits by the Principle of Local Reflexivity \cite[Thm. 3.1]{LR}. If
$Z$ fails the BAP, there exists a compact non-approximable
operator $K\in \mathfrak{K}(Z,Z)$ \cite[5.3]{DF}. If there would
exist a compact lifting $\widetilde{K}:Z\rightarrow X^{\ast \ast
}$ of $K$ then  $\widetilde{K}$, hence also $K$, would be
approximable.
\end{proof}

It therefore follows that $\mathfrak K$-splitting and $\mathfrak
K$-lifting are not equivalent notions. It is not hard to see that
$\mathfrak K$-lifting and ${\mathfrak {W}}$-lifting are also
non-equivalent, even in the presence of the BAP: indeed, it is
shown in \cite{KA} (see also \cite[Thm. 2.3.]{CCKY}) the existence of a
nontrivial exact sequence
$$
\begin{CD}
0@>>>C[0,1]@>>>X@>>>\ell_2@>>>0;
\end{CD}$$
or else, it is obtained in \cite{JL} the existence of nontrivial
exact sequences
$$
\begin{CD}
0@>>>c_0@>>>X@>>>\ell_2(I)@>>>0.
\end{CD}$$

In both cases, recall --see also \cite{K, LR}-- that any exact
sequence $0\rightarrow Y\rightarrow X \rightarrow Z\rightarrow 0$
in which $Y$ is an $\mathcal L_\infty$-space locally splits.

\section{Extension and lifting of polynomials and holomorphic mappings}
For the  polynomial version  of Theorem \ref{locallysplit},  the basic idea is to reduce the problem for polynomials and
holomorphic maps to a problem on linear operators.\\

We denote by $\mathcal{P}(^{n}X)$ the Banach space of all
continuous $n$-homogeneous polynomials on $X$, where the norm is
given by $\|P\| = \sup\{P (x):\|x\| \leq 1\}$. The
natural-isometric predual of $\mathcal{P}(^{n}X)$ is the
projective symmetric tensor product $\hat{\otimes }^{n,s}_{\pi_{s}
}X$ and  the mapping $x \to x^n=\otimes^nx$ is a ``universal"
continuous $n$-homogeneous polynomial on $X$: for every $P\in
\mathcal{P}(^{n}X)$, there is a unique linearization $\widehat{P}
\in \mathcal{L}(\hat{\otimes }^{n,s}_{\pi_{s} }X)$ with the same
norm, such that $P(x)=\widehat{P} (x^n)$ for every $x\in X$  (see
\cite[2.2]{F}). Similarly, we have the vector-valued  version of
the previous identity
$\mathcal{P}(^{n}X,V)=\mathcal{L}(\hat{\otimes }^{n,s}_{\pi_{s}
}X,V)$.

Let $X$ be a complex Banach space. We denote by   $\mathcal{H}_{b} (X)$ the Fr\'echet space of all  holomorphic  mappings on $X$ that are bounded
on the bounded subsets of $X$. Let $\mathcal{H}_{\mathfrak {K}}(X)$ (resp. $\mathcal{H}_{\mathfrak {W}}(X)$) be
the space of the holomorphic  bounded functions in $X$ that are compact (respectively $w$-compact).

Let $f$ be a holomorphic function on $X$. The Taylor series
\begin{equation*}
f(x)\sim \sum\limits_{n=1}^{\infty }\frac{d^{n}f(0)}{n!}(x) \qquad
(d^{n}f(0)\in \mathcal{P}(^{n}X)\simeq \mathcal{L}_{s}(^{n}X)).
\end{equation*}%
decomposes $f$ as a formal sum $\sum_n P_n$ where $P_n\in
\mathcal{P}(^{n}X)$. It is well known that $f
\in\mathcal{H}_{\mathfrak {K}}(X)$ if and only if $P_n\in
\mathcal{P}_{\mathfrak {K}}(^{n}X)$ for all $n$ (same  for
$\mathcal{H}_{\mathfrak {W}}(X)$), see \cite[Prop. 5]{GG1} or
\cite{AS}.

The definitions and general  properties of projective and
injective tensor product spaces can be found in \cite{DF}, and \cite{F} for the symmetric tensor product. In
particular, the injective tensor product $\check{\otimes
}_{\varepsilon }$ has deep connections with vector functions
spaces since it often occurs that injective tensor product spaces
can be represented as vector function spaces. Some examples for
this assertion: $X\check{\otimes }_{\varepsilon }Y$ is always a
subspace of $ \mathfrak{K}_{w^*}(X^*,Y)$ --the space of compact
weak*-continuous operators-- with  equality when $X^*$ or $Y$ have
BAP; also, the space $C(K,X)$ of continuous $X$-valued functions
on a compact space $K$ coincides with $C(K)\check{\otimes
}_{\varepsilon }X$; and the space
$\mathcal{P}_{\texttt{A}}(^{n}Y,X)$ coincides with
$(\check{\otimes }^n_{\varepsilon,{s} }Y^*)\check{\otimes
}_{\varepsilon }X$ (\cite[5.3]{DF}). Further examples of distribution spaces in
locally vector spaces can be found in \cite{B,Ka}. Especially
interesting for us is the identification
$$(V\check{\otimes }_{\varepsilon }Z)^*=\mathcal{I}(V,Z^*)$$
of the dual of the injective tensor product as the space
$\mathcal{I}(V,Z^*)$ of integral operators, see
\cite[\S3. \S4.]{DF} and the identification
$$(V\hat{\otimes }_{\pi }Z)^*
=\mathfrak{L}(V,Z^*)$$ of the dual of the projective tensor
product as the space $\mathfrak{L}(V,Z^*)$ of all operators.

It is well-known that the tensorization $V\check{\otimes
}_{\varepsilon} {-} $ of an exact sequence is not necessarily
exact (see in Section 7 the notions of left-exact and exact
functor). Kaballo \cite{Ka} defines an exact sequence
$0\rightarrow Y \rightarrow X \rightarrow Z \rightarrow 0$ of
locally convex spaces to be an $(\varepsilon L)$-triple when, for
every Banach space $V$, the tensorized sequence
$$\begin{CD} 0 @>>> V\check{\otimes }_{\varepsilon  }Y @>>>
V\check{\otimes }_{\varepsilon  }X @>>> V\check{\otimes
}_{\varepsilon  }Z @>>> 0
\end{CD}$$is exact. In the category of Banach spaces one has:

\begin{proposition}\label{approximation}  Let $0\rightarrow Y\rightarrow X \rightarrow Z\rightarrow 0$
be an exact sequence. The following are equivalent:
\begin{enumerate}
\item The sequence is an $(\varepsilon L)$-triple.

\item The sequence locally splits.

\item For every Banach space $V$, the  sequence

$$
\begin{CD} 0 @>>> V\check{\otimes }_{\varepsilon  }Y @>>>
V\check{\otimes }_{\varepsilon  }X @>>> V\check{\otimes
}_{\varepsilon  }Z @>>> 0\\
\end{CD}
$$
is exact and locally splits.

\item For every Banach space $V$, the sequence
$$\begin{CD} 0 @>>> V\hat{\otimes }_{\pi  }Y @>>>
V\hat{\otimes }_{\pi  }X @>>> V\hat{\otimes }_{\pi  }Z @>>> 0
\end{CD}$$ is exact and locally splits.
\end{enumerate}
\end{proposition}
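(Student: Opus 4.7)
The plan is to split the equivalence into easy and substantive halves. The implication $(3)\Rightarrow(1)$ is trivial, since (3) already asserts exactness for every $V$, and $(3)\Rightarrow(2)$, $(4)\Rightarrow(2)$ are obtained by specialization to $V=\mathbb{K}$, for which both tensor products reduce to the identity and the original sequence is recovered. The real content is in $(2)\Rightarrow(3)$, $(2)\Rightarrow(4)$, and the closing arrow $(1)\Rightarrow(2)$.

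For $(2)\Rightarrow(3)$: assume the original sequence locally splits, so by \thmref{locallysplit} (Kalton's criterion) its dual sequence $0 \to Z^* \to X^* \to Y^* \to 0$ splits, with bounded section $s\colon Y^* \to X^*$ of $i^*$. Injectivity of $V\check{\otimes}_\varepsilon Y \to V\check{\otimes}_\varepsilon X$ is automatic since $\check{\otimes}_\varepsilon$ preserves isometric embeddings. For the remaining exactness, dualize via the identification $(V\check{\otimes}_\varepsilon W)^* = \mathcal{I}(V,W^*)$ recalled before the statement. The dualized sequence
$$0 \to \mathcal{I}(V,Z^*) \to \mathcal{I}(V,X^*) \to \mathcal{I}(V,Y^*) \to 0$$
admits $T \mapsto s\circ T$ as a bounded section of its right-hand arrow (post-composition with a bounded operator preserves integrality), hence is split exact. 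Split-exactness of the dual forces exactness of the primal $V\check{\otimes}_\varepsilon$-sequence; and by \thmref{locallysplit} applied to the tensor sequence, the existence of a dual splitting is precisely local splitting. The argument for $(2)\Rightarrow(4)$ is parallel, using $(V\hat{\otimes}_\pi W)^* = \mathfrak{L}(V,W^*)$: right-exactness of $\hat{\otimes}_\pi$ yields surjectivity on the right and middle exactness up to closure, while injectivity of $V\hat{\otimes}_\pi Y \to V\hat{\otimes}_\pi X$ follows from surjectivity of its adjoint $\mathfrak{L}(V,X^*) \to \mathfrak{L}(V,Y^*)$, $T \mapsto i^*T$, which is made surjective by $T \mapsto sT$; local splitting comes again from \thmref{locallysplit}.

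The hard direction is $(1)\Rightarrow(2)$, and I expect it to be the main obstacle: assumption (1) is only qualitative (exactness for every $V$), whereas local splitting demands a uniform quantitative section. The plan is to dualize (1) to get that $\mathcal{I}(V,X^*) \to \mathcal{I}(V,Y^*)$ is surjective for every $V$, and then to specialize $V$ so that $\mathcal{I}(V,Y^*)$ carries a good factorization of $\mathrm{id}_{Y^*}$; natural candidates are $V = C(B_{Y^*})$ together with the canonical embedding $Y \hookrightarrow C(B_{Y^*})$, or $V = \ell_1(B_Y)$ exploiting the universal lifting property of $\ell_1$-type spaces. A preimage of such a factorization in $\mathcal{I}(V,X^*)$ should assemble into a section $s\colon Y^* \to X^*$ of $i^*$; the boundedness of $s$ is harvested from the open mapping theorem applied to the exact Banach-space sequence $0 \to V\check{\otimes}_\varepsilon Y \to V\check{\otimes}_\varepsilon X \to V\check{\otimes}_\varepsilon Z \to 0$. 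Once such a section exists, the dual sequence of the original splits and \thmref{locallysplit} delivers local splitting, closing the cycle.
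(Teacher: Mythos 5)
The decisive gap is the implication $(1)\Rightarrow(2)$. You correctly flag it as the main obstacle, but what you give is a plan rather than a proof, and its concrete ingredients do not work as stated: for infinite-dimensional $Y$ the identity $\mathrm{id}_{Y^*}$ is \emph{not} an integral operator, so there is no ``factorization of $\mathrm{id}_{Y^*}$'' sitting inside $\mathcal{I}(V,Y^*)$ for $V=C(B_{Y^*})$ or $V=\ell_1(B_Y)$, and hence nothing in $\mathcal{I}(V,X^*)$ to pull it back from. This implication is exactly Kaballo's theorem (\cite[Thm. 2.2]{Ka}), which the paper does not reprove but cites; the known argument tests the tensorized sequences against finite-dimensional spaces $V=E^*$ (where $E^*\check{\otimes}_{\varepsilon}W=\mathfrak{L}(E,W)$) and extracts the uniform constant $\lambda$ of Definition 2 by an open-mapping argument. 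Without either that argument or the citation, your cycle of implications is not closed.

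A secondary gap sits in $(2)\Rightarrow(3)$: ``admits a bounded section of its right-hand arrow, hence is split exact'' is too quick, since a three-term complex whose last arrow admits a section need not be exact in the middle. Exactness of the dual complex at $\mathcal{I}(V,X^*)$ requires knowing that an integral $S\colon V\to X^*$ with $i^*S=0$, hence with values in $Y^{\perp}=q^*(Z^*)$, can be written as $q^*T$ with $T$ \emph{integral}; this is the (true but non-obvious) injectivity of the ideal of Grothendieck-integral operators and must be invoked explicitly. That said, once these repairs are made your route differs from the paper's in an interesting way: the paper only proves $(1)\Rightarrow(3)$, taking exactness of the tensorized sequence from hypothesis (1) and adding local splitting via the same dual section $R(I)=r\circ I$, whereas you derive the exactness itself from (2). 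Combined with the trivial $(3)\Rightarrow(1)$, this yields $(2)\Rightarrow(1)$ without Kaballo, leaving only the converse $(1)\Rightarrow(2)$ to his theorem.
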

 \begin{proof} The equivalence between (1) and (2) was
 obtained by Kaballo \cite[Thm. 2.2]{Ka}.  To see that (1) implies (3) it is enough to check
 that the adjoint operator $(Id{\otimes }q)^*:(V\check{\otimes }_{\varepsilon  }Z)^* \rightarrow
(V\check{\otimes }_{\varepsilon  }X)^*$  admits a linear
continuous projection. Let  $r$ be a projection for $q^*$; i.e.,
$r\circ q^*=Id_{Z^*}$. Under the identification of the dual of the  injective tensor space with the space of integral operators
the operator $(Id{\otimes }q)^*$ becomes left-composition with
$q^*$. Thus, the operator $R:\mathcal{I}(V,X^*) \rightarrow
\mathcal{I}(V,Z^*)$ given by  $R(I) = r \circ {I}$ induces a
projection through $(Id{\otimes }q)^*$. The case of the projective
tensor product is analogous using the corresponding identification
$(V\hat{\otimes }_{\pi }Z)^* =\mathcal{L}(V,Z^*)$. That (3),(4)
imply (2) is clear since $ V\check{\otimes }_{\varepsilon  }Z =
V\hat{\otimes }_{\pi }Z =Z$ when $V=\mathbb{K}$. \end{proof}

A few more facts about the so-called Aron-Berner extension for
polynomials will be required. If $Y$ is a locally complemented
subspace of $X$, it is then clear that there exists  a linear
continuous section $s:Y^* \to X^*$  extending operators (i.e.;
$i^*s=Id_{Y^*}$, \cite[Th. 3.5]{K}). The operator $s$ induce
--just using induction on $n$-- continuous linear maps
$$
AB: \mathcal{P}(^nY)\to \mathcal{P}(^nX).\\
$$

This is the well-known Aron-Berner extension. We will use the
notation $AB(P)=\overline{P}$. Different descriptions of the
Aron-Berner extension can be seen in \cite{AB,CGV,GJLl,Z}. The
operator  $AB$  is a section for the restriction operator
$R:\mathcal{P}(^nX)\to \mathcal{P}(^nY)$. Thus, the following
exact sequence locally splits
$$
\begin{CD} 0 @>>>\hat{\otimes }_{\pi,s }^{n}Y@>{\otimes i}>>\hat{\otimes }_{\pi,s }^{n}X@>>>
(\hat{\otimes}_{\pi, s}^{n}X)/(\hat{\otimes }_{\pi,s }^{n}Y)@>>>
0.\\
\end{CD}$$\

About what types of polynomials are preserved by the Aron-Berner
extension, it is clear that polynomials of finite type,
approximable, compact and weakly compact polynomials are preserved (see \cite{CL,Z} for more classes of polynomials).\\

We now define the Aron-Berner extension for vector-valued
polynomials. Let $Y,V$ be Banach spaces. If $\phi \in V^*$, the
operator
$$
AB: \mathcal{P}(^nY;V)\to \mathcal{P}(^nX,V^{**})
$$
is defined by composition  $AB(P)(x)(\phi)=(\overline{\phi \circ
P})(x)$ (see \cite{CGV,CL,Z}). In general, $AB$ does not take its
values in $V$. It is clear that if $P\in \mathcal{P}(^nY;V)$ has
weakly compact associated linear operator $T_{P}:Y\to
\mathcal{P}(^{n-1}Y;V) $ then $AB(P)$ is $V$-valued (see
\cite[Section 2.3]{CL}). Since the classes  of finite type,
approximable, compact and  weakly compact polynomials all have
weakly compact associated linear operator, their respective
Aron-Berner extensions are $V$-valued (see \cite{CL} for details
and the
consideration of other classes of polynomials).\\

We are thus ready to obtain extension/lifting theorems for
polynomials and holomorphic functions.

\begin{theorem}\label{ls} Let $0\rightarrow Y\overset{i}{\rightarrow }
X\overset{q}{\rightarrow }Z\rightarrow 0 $ be an exact sequence of
Banach spaces. The following are equivalent:
\begin{enumerate}
\item The sequence locally splits.

\item  The sequence $\mathcal{P}_{\texttt A}$-splits.
 \item The sequence  $\mathcal{P}_{\texttt A}$-lifts.
\item  The sequence $\mathcal{P}_{\mathfrak {K}}$-splits, \item
The sequence $\mathcal{H}_{\mathfrak {K}}$-splits. \item The
sequence $\mathcal{P}_{\mathfrak {W}}$-splits \item  The sequence
$\mathcal{H}_{\mathfrak
 {W}}$-splits.

If, moreover, $Z$ has the BAP, they are also equivalent to \item
The sequence  $\mathcal{P}_{\mathfrak {K}}$-lifts. \item The
sequence $\mathcal{H}_{\mathfrak {K}}$-lifts.
\end{enumerate}
\end{theorem}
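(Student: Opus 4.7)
The plan is to linearize $n$-homogeneous polynomials via the canonical isomorphism $\mathcal{P}(^nY,V)=\mathcal{L}(\hat{\otimes }^{n,s}_{\pi_{s} }Y,V)$, so that extension/lifting of polynomials translates into extension/lifting of operators between symmetric projective tensor products, where Theorem \ref{locallysplit} does the work. For $(1)\Rightarrow(2),(4),(6)$: the discussion preceding the theorem already notes that local splitting together with the Aron--Berner construction produces a locally splitting symmetric tensor sequence
\[
0\to\hat{\otimes }^{n,s}_{\pi_{s} }Y\to\hat{\otimes }^{n,s}_{\pi_{s} }X\to(\hat{\otimes }^{n,s}_{\pi_{s} }X)/(\hat{\otimes }^{n,s}_{\pi_{s} }Y)\to 0
\]
for every $n$. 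Theorem \ref{locallysplit} applied to it yields uniform $\texttt{A}$-, $\mathfrak{K}$- and $\mathfrak{W}$-splitting, and de-linearizing (using that a polynomial is approximable, compact or weakly compact iff its linearization is) gives $(2)$, $(4)$ and $(6)$. For the liftings $(3)$ and, under BAP, $(8)$, no left-tensorization is needed: linearize $P:V\to Z$ to $\widehat P:\hat{\otimes }^{n,s}_{\pi_{s} }V\to Z$ and apply parts $(7)$--$(8)$ of Theorem \ref{locallysplit} to the original sequence $0\to Y \to X \to Z \to 0$ itself.

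The converses $(2),(3),(4),(6)\Rightarrow (1)$ are immediate: a bounded linear operator $t:Y\to V$ is a $1$-homogeneous polynomial belonging to each of the ideals $\texttt{A}$, $\mathfrak{K}$, $\mathfrak{W}$; given any polynomial extension (resp.\ lifting) $\widetilde P$ of $t$ in the corresponding polynomial ideal, the $1$-homogeneous Taylor component of $\widetilde P$ at the origin is a linear extension (resp.\ lifting) of $t$ in the corresponding operator ideal, and parts $(4)$--$(7)$ of Theorem \ref{locallysplit} then force local splitting.

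The holomorphic case is the main technical obstacle. Given $f\in\mathcal{H}_{\mathfrak{K}}(Y,V)$, decompose $f=\sum_{n\ge 0}P_n$ with $P_n\in\mathcal{P}_{\mathfrak{K}}(^nY,V)$ (by the coefficient-by-coefficient criterion cited from \cite{GG1,AS}) and extend each $P_n$ to $\overline{P_n}\in\mathcal{P}_{\mathfrak{K}}(^nX,V)$ via Aron--Berner. The difficulty is to show that $\sum_n\overline{P_n}$ defines a bounded holomorphic function on $X$: the Aron--Berner extension built from a section of norm $\lambda$ yields only an estimate of the form $\|\overline{P_n}\|\le\lambda^n\|P_n\|$, but this exponential factor is absorbed because $f\in\mathcal{H}_b(Y,V)$ forces, via Cauchy estimates on arbitrarily large balls, $\limsup_n\|P_n\|_B^{1/n}=0$ on every bounded $B\subset Y$, whence $\limsup_n\|\overline{P_n}\|_{B'}^{1/n}=0$ on every bounded $B'\subset X$. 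Thus $\sum_n\overline{P_n}$ converges uniformly on bounded subsets of $X$ and is compact (resp.\ weakly compact) because each summand is. This handles $(1)\Rightarrow (5),(7)$ and, under BAP, $(1)\Rightarrow (9)$. The remaining implications $(5)\Rightarrow (4)$, $(7)\Rightarrow (6)$ and $(9)\Rightarrow (8)$ follow by viewing a polynomial of degree $n$ as a bounded holomorphic function and extracting the $n$-th Taylor coefficient of the holomorphic extension (resp.\ lifting) at the origin, which restricts to (resp.\ projects onto) the original polynomial.
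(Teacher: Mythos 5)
Your overall strategy (linearize through $\hat{\otimes}^{n,s}_{\pi_s}$, feed the locally split symmetric tensor sequence into Theorem~\ref{locallysplit}, de-linearize; handle the holomorphic classes term by term with the growth estimate $\limsup_n\|P_n\|^{1/n}=0$) is the same as the paper's for items (4)--(9), and your converses via degree-one Taylor components are fine. But there is a genuine gap in your treatment of the approximable items (2) and (3). The de-linearization step rests on the claim that ``a polynomial is approximable iff its linearization is,'' and that is false in the direction you need. Under the isometry $\mathcal{P}(^nY,V)=\mathcal{L}(\hat{\otimes}^{n,s}_{\pi_s}Y,V)$, finite-\emph{type} polynomials correspond only to a special subclass of finite-\emph{rank} operators; a general rank-one operator on $\hat{\otimes}^{n,s}_{\pi_s}Y$ composed with $x\mapsto x^n$ is a polynomial of the form $x\mapsto Q(x)v$ with $Q\in\mathcal{P}(^nY)$ arbitrary, and such polynomials need not be approximable (on $\ell_2$ the scalar polynomial $Q(x)=\sum_i x_i^2$ has rank one but is not a limit of finite-type polynomials, since it is not weakly sequentially continuous). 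Consequently, the approximable operator extension/lifting $T$ produced by Theorem~\ref{locallysplit}(6)--(7) yields, after composing with the diagonal, a polynomial that you cannot certify lies in $\mathcal{P}_{\texttt{A}}$. (The same linearization argument is perfectly sound for $\mathfrak K$ and $\mathfrak W$, where membership is detected by the image of the unit ball.)

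The paper circumvents this in two different ways. For $(1)\Rightarrow(2)$ it extends finite-type polynomials directly: given the section $s:Y^*\to X^*$ coming from local splitting, $\sum\phi_i^n\otimes v_i\mapsto\sum s(\phi_i)^n\otimes v_i$ is a bounded map $\mathcal{P}_f(^nY;V)\to\mathcal{P}_f(^nX;V)$, which extends by density and completeness to $\mathcal{P}_{\texttt{A}}$. For $(1)\Rightarrow(3)$ it uses the identification $\mathcal{P}_{\texttt{A}}(^nV,X)=\mathcal{P}_{\texttt{A}}(^nV)\check{\otimes}_{\varepsilon}X$ together with Proposition~\ref{approximation} (exactness of $W\check{\otimes}_\varepsilon{-}$ on locally split sequences), so that surjectivity of $\mathcal{P}_{\texttt{A}}(^nV,X)\to\mathcal{P}_{\texttt{A}}(^nV,Z)$ is read off directly, staying inside the approximable class throughout. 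You need one of these (or an equivalent device) to repair (2) and (3); note that (3) matters downstream, since the paper derives (8) from it via $\mathcal{P}_{\mathfrak K}(^nV,Z)=\mathcal{P}_{\texttt{A}}(^nV,Z)$ when $Z$ has the BAP --- although your alternative derivation of (8), lifting the compact linearization through Theorem~\ref{locallysplit}(8) and composing with the diagonal, is legitimate and does not pass through (3).
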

\begin{proof} $(1)\Rightarrow (3)$. To lift approximable
polynomials it is enough to apply Proposition \ref{approximation}
and  tensorize  with  $\mathcal{P}_{\texttt A}(V)$ since
$\mathcal{P}_{\texttt A}(^{n}V,X )=\mathcal{P}_{\texttt A}(^{n}V)
\check {\otimes }_{\varepsilon }X$  (\cite[5.3]{DF}). \\

$(1)\Rightarrow (2)$ Let $P\in \mathcal{P}_{f}(^nY;V)$ be a
polynomials of finite type, say $P=\sum_{i=1}^{m}\phi^n_{i}y_{i}$
where $y_{i}\in Y$ and $\phi_{i}\in Y^*$. Then
$AB(P)=\sum_{i=1}^{m}s(\phi)^n_{i}y_{i}$. It is follows that
$AB:\mathcal{P}_{f}(^nY;V)\to \mathcal{P}_{f}(^nX;V)$ is a
complemented embedding. Using the arguments of the proof of
Theorem  \ref{locallysplit}, the continuity of $AB$ and the
completeness of $V$ yield (2).\\

$(1)\Rightarrow (4) $ and $(6)$. For the case of
$\mathcal{P}_{\mathfrak {K}}$-splitting and
$\mathcal{P}_{\mathfrak {W}}$-splitting, let $P\in $
$\mathcal{P}(^{n}X,V)$ and denote by $\widehat{P}\in $
$\mathcal{L}(\hat{\otimes }_{\pi,s }^{n}X,V)$ the linearization of
$P$ transforming compact polynomials (resp. $w$-compact) into
compact operators (resp. $w$-compact) and viceversa (see
\cite[Lemma 4.1]{Ry}). Since the sequence locally splits, the
Aron-Berner extension provides the locally splitting of the
sequence (see \cite[2.6]{CGV})
$$
\begin{CD} 0 @>>>\hat{\otimes }_{\pi,s }^{n}Y@>{\otimes i}>>\hat{\otimes }_{\pi,s }^{n}X@>>>
Q=(\hat{\otimes}_{\pi, s}^{n}X)/(\hat{\otimes }_{\pi,s }^{n}Y)@>>>
0.
\end{CD}$$
For a given polynomial $P\in \mathcal{P}_{\mathfrak {K}}(^{n}Y,V)$
(resp. $\mathcal{P}_{\mathfrak {W}}(^{n}Y,V)$), one gets
 that the linearization of $\widehat{P}$ can be extended as in the diagram
  $$
\begin{CD}
0@>>>{\hat{\otimes}_{\pi,s }^{n}Y}@>{\otimes i}>>{\hat{\otimes
}_{\pi,s }^{n}X}@>>>Q@>>>0\\
 &&@V\widehat{P}VV @V{\overline{\widehat{P}}}VV \\
&& V &=&V
 \end{CD}
 $$since the tensorized sequence ${\mathfrak {K}}$-splits (resp.
${\mathfrak {W}}$-splits) by Theorem \ref{locallysplit}.\\

$(1)\Rightarrow (5) $ and $(7)$. Let us show now the case of
holomorphic functions. Let $f\in \mathcal{H}_{\mathfrak {K}}(Y,V)$
and let $f(x)=\sum\limits_{k}\frac{d^{k}f(0)}{k!}(x)$ be its
Taylor series, where $d^{k}f(0)\in \mathcal{P}_{\mathfrak
{K}}(^{n}X,V)$ (see \cite{AS,D}). Then we may define the extension
operator $ \Phi :\mathcal{H}_{\mathfrak {K}}(Y,V)\rightarrow
\mathcal{H}_{\mathfrak {K}}(X,V)$ as
\begin{equation*}
\Phi(f)=\overline{f}=\sum\limits_{k}\frac{\overline{d^{k}f}(0)}{k!}.
\end{equation*}
The convergence follows from \cite[Lemma 3.1]{CL} or \cite{Z} (see
\cite{AB,AS,D} for the scalar case). The same argument works for
the case $\mathcal{H}_{\mathfrak {W}}(Y,V)$ (see  \cite{AS,CL}).
To obtain the converse, we just recall that the operators are
exactly the polynomials of degree $1$. We may also easily check
that the map $s:Y^{\ast }\rightarrow X^{\ast }$ given by
$s(y^{_{\ast }})=d(\Phi (y^{\ast }))(0)$ is a section for $i^{\ast
}:X^{\ast }\rightarrow Y^{\ast }$.\\

$(1)\Rightarrow (8)$. If $Z$ has the BAP then one gets
$\mathcal{P}_{\mathfrak {K}}(^nV,Z)=\mathcal{P}_{A}(^nV,Z)$
(\cite[5.3]{DF}), and the result follows from $(3)$.
\end{proof}

Given a closed injective operator ideal $\mathcal U \subseteq
\mathcal W$ the theorem above can be extended to the classes
$\mathcal P_{\mathcal U}$ using the polynomial factorization given
in \cite[Cor. 5]{GG2}. See the comment after \cite[Cor. 2.7.]{CL}
for details. Taking this into account, Theorem \ref{ls} yields.

\begin{corollary}\label{KW^*} Let $0\rightarrow Y\overset{i}{\rightarrow }
X\overset{q}{\rightarrow }Z\rightarrow 0 $ be an exact sequence of
Banach spaces. The following are equivalent:
\begin{enumerate}
\item The sequence locally splits

\item For every dual Banach space $V^*$ with the BAP the functor
$\mathcal{P}_{\mathfrak{K}w^*}(^nV^*,-) $ transforms it into an
exact sequence.

\item For every dual Banach space $V^*$ with the BAP the functor
$\mathcal{H}_{\mathfrak{K}w^*}(V^*,-) $ transforms it into an
exact sequence.
\end{enumerate}
\end{corollary}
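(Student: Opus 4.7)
My plan is to transport the argument of Theorem~\ref{ls} from the ideal $\mathfrak{K}$ to the ideal $\mathfrak{K}w^*$ of compact $w^*$-continuous operators, using the polynomial factorization of \cite[Cor.~5]{GG2} to reduce to the linear case of Theorem~\ref{locallysplit}. This is precisely the reduction alluded to in the paragraph preceding the corollary: $\mathfrak{K}w^*$ is a closed injective subideal of $\mathfrak{W}$, so the abstract scheme applies.

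For the forward direction (1)$\Rightarrow$(2), I would take $P \in \mathcal{P}_{\mathfrak{K}w^*}(^nV^*,Z)$ with $V^*$ a dual Banach space having the BAP, and factor it as $P = L \circ Q$ via \cite[Cor.~5]{GG2}, with $Q \in \mathcal{P}_{\mathfrak{K}w^*}(^nV^*,W)$ taking values in an intermediate Banach space $W$ and $L \in \mathfrak{K}(W,Z)$. The BAP hypothesis on $V^*$ combined with the factorization forces $L$ to sit in the approximable ideal $\texttt{A}$ (the content of the comment following \cite[Cor.~2.7]{CL}). Theorem~\ref{locallysplit}(7) then produces a lift $\tilde L \in \texttt{A}(W,X)$ of $L$, and $\tilde P := \tilde L \circ Q \in \mathcal{P}_{\mathfrak{K}w^*}(^nV^*,X)$ is the required lift of $P$: its compactness and $w^*$-continuity are inherited from $Q$, together with the fact that $\tilde L$ is a norm limit of finite-rank operators.

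For (1)$\Rightarrow$(3), I would decompose $f \in \mathcal{H}_{\mathfrak{K}w^*}(V^*,Z)$ into its Taylor series $f = \sum_k d^k f(0)/k!$, lift each homogeneous component via (2), and reassemble the lifts into a holomorphic lift using the convergence estimates of \cite[Lemma~3.1]{CL}, exactly as in the passage $(1)\Rightarrow(5),(7)$ of the proof of Theorem~\ref{ls}. For the converses (2)$\Rightarrow$(1) and (3)$\Rightarrow$(1), I would specialize to the degree-one case of (2) and to the differential at the origin in (3), and retrace the closing argument of the proof of Theorem~\ref{ls} to obtain a section $s: Y^* \to X^*$ of $i^*$, which characterizes local splitting by \cite[Thm.~3.5]{K}.

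The main obstacle will be the first step: verifying, from \cite[Cor.~5]{GG2} combined with the BAP of $V^*$, that the factor $L$ actually sits in $\texttt{A}$ rather than merely in $\mathfrak{K}$. Otherwise Theorem~\ref{locallysplit}(7) cannot be invoked, while clause (8) there would demand the BAP of $Z$, which is \emph{not} available as a hypothesis. Once this technical point is pinned down, the rest is a routine adaptation of the corresponding passages in the proofs of Theorems~\ref{locallysplit} and~\ref{ls}.
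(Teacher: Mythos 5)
Your strategy---factor the polynomial as $P=L\circ Q$ with $L\in\mathfrak K(W,Z)$ via \cite[Cor.~5]{GG2} and lift the linear factor---is not the route the paper takes, and it founders precisely at the step you flag as ``the main obstacle''. The BAP of $V^{*}$ gives no control over the compact operator $L$: its domain $W$ is an intermediate space manufactured from the range of $P$ (a Gonz\'alez--Guti\'errez/DFJP-type construction), and neither $W^{*}$ nor $Z$ is assumed to have any approximation property, so there is no reason for $L$ to be approximable. The comment after \cite[Cor.~2.7]{CL} concerns transferring the \emph{splitting} (extension) statements of Theorem~\ref{ls} to closed injective subideals $\mathcal U\subseteq\mathfrak W$ --- there the factorization helps because one only needs to \emph{extend} $L$, which works for any such $\mathcal U$ by the bidual argument. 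For \emph{lifting}, which is what exactness of the covariant functor in items (2) and (3) amounts to, the factorization leads you straight into the dead end you describe: Theorem~\ref{locallysplit}(7) is unavailable because $L$ need not lie in $\texttt{A}$, and (8) is unavailable because $Z$ is not assumed to have the BAP. (A further unchecked point in your recomposition is the $w^{*}$-continuity of $Q$, hence of $\tilde L\circ Q$; and ``transforms it into an exact sequence'' also requires exactness at the middle term, which your proposal never addresses.)

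The paper's proof applies the BAP where it actually acts: since $V^{*}$ has the BAP, so does $(\check{\otimes}^{n}_{s,\varepsilon}V)^{*}$ \cite{DF,F}, and therefore $\mathcal{P}_{\mathfrak{K}w^{*}}(^{n}V^{*},-)=(\check{\otimes}^{n}_{s,\varepsilon}V)\check{\otimes}_{\varepsilon}(-)$; that is, the functor in (2) is literally an injective tensor functor, every compact $w^{*}$-continuous polynomial being a limit of $w^{*}$-continuous finite-type ones. Proposition~\ref{approximation} then gives both implications at once (local splitting iff the tensorized sequences are exact), with $w^{*}$-continuity of the lift built in, and the holomorphic case (3) follows by the Taylor-series decomposition as in Theorem~\ref{ls}. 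If you want to keep a linearization argument, the object to lift is the $w^{*}$-linearization of $P$ on $(\check{\otimes}^{n}_{s,\varepsilon}V)^{*}$, which \emph{is} approximable by $w^{*}$-continuous finite-rank operators thanks to the BAP of that dual --- not the factor $L$ living on $W$.
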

\begin{proof} It is well-known \cite{DF, F} that if $V^*$ has the
BAP then also  $( \check {\otimes }^n_{s,\varepsilon } {V} )^*$
has the BAP. The result now follows from the identification $$(
\check {\otimes }^n_{s,\varepsilon } {V} ) \check {\otimes
}_{\varepsilon } {{-}} =
\mathcal{P}_{\mathfrak{K}w^*}(^nV^*,{{-}}).$$

\end{proof}

\section{Characterization of $\mathcal{L}_{\infty}$-spaces and $\mathcal{L}_{1}$-spaces}

A Banach space $X$ is said to be an $\mathcal L_{\infty,
\lambda}$-space (resp. $\mathcal L_{1, \lambda}$-space) if every
finite dimensional subspace $E\subset X$ is contained in a finite
dimensional subspace $F\subset X$ that is $\lambda$-isomorphic to
$\ell_\infty^{dim F}$ (resp. $\ell_1^{dim F}$). If no reference to
$\lambda$ is necessary we will simply say that $X$ is an $\mathcal
L_\infty$ (resp. $\mathcal L_1$-space). Lindenstrauss and
Rosenthal characterize in \cite[Thm. 4.1.]{LR} the
$\mathcal{L}_{\infty}$ (resp. $\mathcal{L}_{1}$)-spaces through
the $\mathfrak{K}$-splitting (resp. ${\mathfrak {K}}$-lifting) of
exact sequences (equivalences $(1) \Leftrightarrow (2)$ in
Theorems \ref{eleinf} and \ref{eleuno} below), while Gonz\'{a}lez
and Guti\'{e}rrez in \cite{GG} extend the result using
$\mathcal{P}_{\mathfrak{K}}$-splitting (resp.
$\mathcal{P}_{\mathfrak{K}}$-lifting). We provide next a unified
approach to these results and several generalizations.

Indeed, $\mathcal L_\infty$ and $\mathcal L_1$-spaces are
necessarily involved in our study of extension/lifting of
operators. To show why, recall that a Banach space $X$ is said to
be injective if every exact sequence $0\rightarrow X \rightarrow
\diamondsuit\rightarrow \spadesuit\rightarrow 0$ splits. It is on
the other hand well-known that a Banach space is an $\mathcal
L_\infty$-space if and only if its bidual space is injective \cite[p. 335]{LR}. One
therefore has the following characterization of $\mathcal
L_\infty$-spaces.

\begin{theorem}\label{eleinf} Let $Y$ be a Banach space. The following conditions are
equivalent:
\begin{enumerate}
\item $Y$ is an $\mathcal L_\infty$-space.

\item Every exact sequence $0\rightarrow Y \rightarrow
\diamondsuit\rightarrow \spadesuit\rightarrow 0$ locally splits.

\item Every exact sequence $0\rightarrow Y \rightarrow
\diamondsuit \rightarrow \spadesuit\rightarrow 0$ $\mathcal
A$-lifts for any of the choices $\mathcal A = \mathfrak K$,
$\mathcal{P}_{\mathfrak{K}}$ or $\mathcal{H}_{\mathfrak{K}}$.

\item Each of the functors $Y\check{\otimes }_{\varepsilon }{-} $,
$\mathfrak{K}_{w^*}(Y^*,-)$,
$\mathcal{P}_{\mathfrak{K}w^*}(^nY^*,-) $ or
$\mathcal{H}_{\mathfrak{K}w^*}(Y^*,-) $ transform exact sequences
into exact sequences.

\end{enumerate}
\end{theorem}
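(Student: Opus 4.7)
The plan is to treat condition (2) as the central hub, since Theorem \ref{locallysplit}, Theorem \ref{ls}, Proposition \ref{approximation} and Corollary \ref{KW^*} already translate local splitting into every other splitting/lifting/tensor-exactness condition. Once $(1)\Leftrightarrow(2)$ is nailed down, the rest should follow with little extra work.

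For $(1)\Rightarrow(2)$ I would invoke the result of Kalton cited in the introduction: if $Y$ is an $\mathcal L_\infty$-space and $Y\subset\diamondsuit$ then $Y$ is locally complemented in $\diamondsuit$. For the converse $(2)\Rightarrow(1)$ I embed $Y$ isometrically into $\ell_\infty(\Gamma)$ and apply (2) to the canonical sequence $0\to Y\to\ell_\infty(\Gamma)\to\ell_\infty(\Gamma)/Y\to 0$. By \cite[Thm. 3.5]{K}, local splitting is equivalent to the splitting of the dual sequence, so $Y^*$ is complemented in the abstract $L$-space $\ell_\infty(\Gamma)^*$. Dualizing again puts $Y^{**}$ as a complemented subspace of the injective bidual $\ell_\infty(\Gamma)^{***}$, whence $Y^{**}$ is injective and so $Y$ is an $\mathcal L_\infty$-space by the standard characterization recorded in the paragraph preceding the statement.

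For $(2)\Leftrightarrow(3)$ the $\mathfrak K$-case is exactly the classical Lindenstrauss--Rosenthal theorem \cite[Thm. 4.1]{LR}. The polynomial and holomorphic versions reduce to the linear case by the linearization-plus-Aron-Berner argument used in the proof of Theorem \ref{ls}: a compact polynomial $P\in\mathcal P_{\mathfrak K}(^n V,\spadesuit)$ linearizes to a compact operator $\widehat P$ on $\hat\otimes_{\pi,s}^n V$, which lifts by $\mathfrak K$-lifting, and the lift delinearizes back to a compact polynomial; holomorphic liftings are assembled from the polynomial ones by summing Taylor series, whose convergence is \cite[Lemma 3.1]{CL}. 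The converses $(3)\Rightarrow(1)$ are immediate because each condition in (3) contains $\mathfrak K$-lifting (operators are polynomials of degree one and $\mathfrak K\subset\mathcal P_{\mathfrak K}\subset\mathcal H_{\mathfrak K}$), and the converse half of Lindenstrauss--Rosenthal recovers $\mathcal L_\infty$.

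Finally $(1)\Leftrightarrow(4)$ comes from combining Proposition \ref{approximation} with the tensor product identifications $Y\check\otimes_\varepsilon -=\mathfrak K_{w^*}(Y^*,-)$ and $(\check\otimes_{s,\varepsilon}^n Y^*)\check\otimes_\varepsilon -=\mathcal P_{\mathfrak K w^*}(^n Y^*,-)$ (and the holomorphic analogue), all available because $Y^*\in\mathcal L_1$ has the BAP so Corollary \ref{KW^*} applies. The direction $(1)\Rightarrow(4)$ is the classical fact that $Y\check\otimes_\varepsilon -$ is an exact functor precisely when $Y$ is $\mathcal L_\infty$ (left-exactness being automatic, right-exactness obtained by approximating $Y$ by its $\ell_\infty^n$-subspaces); for the tensor powers I use that the $\mathcal L_1$/BAP structure of $Y^*$ propagates to every $\check\otimes_{s,\varepsilon}^n Y^*$. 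The direction $(4)\Rightarrow(1)$ is obtained by specializing $Y\check\otimes_\varepsilon -$ to the sequence $0\to Y\to\ell_\infty(\Gamma)\to\ell_\infty(\Gamma)/Y\to 0$ and reading off the dual-projection argument used in $(2)\Rightarrow(1)$. The main obstacle I expect is exactly this last point: (4) is formulated for \emph{all} exact sequences whereas (2) concerns only sequences containing $Y$, so the delicate step is verifying that the $\mathcal L_\infty$/$\mathcal L_1$ duality on $Y$ is what upgrades $Y\check\otimes_\varepsilon -$ from being exact on the distinguished sequence to being an exact functor on the whole category of short exact sequences.
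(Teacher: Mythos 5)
Your reduction of the polynomial and holomorphic cases to the linear one (linearization on $\hat{\otimes}_{\pi,s}^{n}V$ plus Taylor series) matches the paper, and your $(1)\Leftrightarrow(2)$ is essentially the paper's (modulo a slip: dualizing the complemented embedding $Y^*\subset \ell_\infty(\Gamma)^*$ puts $Y^{**}$ complementably inside $\ell_\infty(\Gamma)^{**}$, not $\ell_\infty(\Gamma)^{***}$; fortunately $\ell_\infty(\Gamma)^{**}$, being the dual of an $\mathcal L_1$-space, is the injective space you need). But there is a genuine gap at the heart of the argument: you dispose of $(1)\Leftrightarrow(3)$ for $\mathcal A=\mathfrak K$ by declaring it ``exactly the classical Lindenstrauss--Rosenthal theorem.'' It is not. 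Lindenstrauss--Rosenthal characterizes $\mathcal L_\infty$-spaces $Y$ by $\mathfrak K$-\emph{splitting} of sequences $0\to Y\to\diamondsuit\to\spadesuit\to 0$ (extension of compact operators defined \emph{on} the subspace $Y$), whereas condition (3) demands $\mathfrak K$-\emph{lifting} (lifting of compact operators \emph{into the quotient} $\spadesuit$). The entire point of the paper --- see Proposition \ref{bap} and Section 6 --- is that these two properties are \emph{not} equivalent in general, and you cannot pass from (2) to (3) via Theorem \ref{locallysplit}(8) either, because $\spadesuit$ is arbitrary and need not have the BAP. The same conflation undermines your $(3)\Rightarrow(1)$, where you again invoke ``the converse half of Lindenstrauss--Rosenthal,'' which recovers $\mathcal L_\infty$ from compact \emph{extension}, not from compact \emph{lifting}.

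What is actually needed (and what the paper does) is a diagram chase through presentations. For $(1)\Rightarrow(3)$: given compact $K:Z\to\spadesuit$, take a surjection $Q:\ell_1(\Gamma)\to Z$; since $\ell_1(\Gamma)$ is an $\mathcal L_1$-space, Lindenstrauss--Rosenthal lifts $KQ$ to a compact $K_0:\ell_1(\Gamma)\to\diamondsuit$; its restriction $K_1:\ker Q\to Y$ is compact and, because $Y$ is $\mathcal L_\infty$, extends to a compact $\widetilde K_1:\ell_1(\Gamma)\to Y$ by Lindenstrauss's compact-extension theorem; then $j\widetilde K_1-K_0$ kills $\ker Q$ and factors through $Z$ as the desired compact lifting. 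For $(3)\Rightarrow(1)$ one runs the dual chase through $0\to Y\to\ell_\infty(\Gamma)\to\ell_\infty(\Gamma)/Y\to 0$ to convert a compact lifting into a compact extension of an arbitrary compact $K:X\to Y$, and only \emph{then} applies Lindenstrauss--Rosenthal. This ``balanced ideal'' mechanism is the real content of the implication and is absent from your proposal. A secondary, smaller issue: in $(4)\Rightarrow(1)$ you apply exactness of $Y\check{\otimes}_\varepsilon{-}$ to one distinguished sequence, but Proposition \ref{approximation} requires exactness of $V\check{\otimes}_\varepsilon{-}$ on that sequence for \emph{all} $V$, not just $V=Y$; the paper instead leans on Kaballo's theorem that exactness of the functor $Y\check{\otimes}_\varepsilon{-}$ on all sequences characterizes $\mathcal L_\infty$-spaces.
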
 \begin{proof} The equivalence between $(1)$ and $(2)$ is thus clear
from the comments before the statement of the theorem and Kalton's
characterization of locally splitting.

$(1)\Rightarrow (3)$ Let $K:Z\to \spadesuit $ be a compact
operator. Take an index set $\Gamma$ for which there exists a linear
continuous surjection $Q: \ell_1(\Gamma)\to Z$. Form then the
commutative diagram
$$
\begin{CD}
0@>>>\ker Q@>i>>\ell_1(\Gamma)@>Q>>Z@>>>0\\
 &&@VV{K_1}V @VV{K_0}V @VV{K}V\\
 0@>>> Y @>j>> \diamondsuit @>q>> \spadesuit @>>> 0\\
 \end{CD}
$$ where we can assume that $K_0$ is a compact lifting of $KQ$ with $\|K_0\|\leq (1+\varepsilon)\|K\|$, see \cite[Thm. 4.1.]{LR}.
Thus, $K_1$ must also be compact. Consider then a compact
extension $\widetilde{K}_{1}:\ell_1(\Gamma)\to Y$ of $K_1$, which
yields  a compact operator
$j\widetilde{K}_{1}-K_0:\ell_1(\Gamma)\to \diamondsuit$. Since
$(j\widetilde{K}_{1}-K_0)i=0$, the operator
$j\widetilde{K}_{1}-K_0$ must factorize through a (necessarily
compact) operator $\widetilde{K}:Z\to \diamondsuit$ which is the
desired lifting of $K$ we were looking for.

The exactness of $Y\check{\otimes }_{\varepsilon }{-} $ can be
found in \cite[Thm. 1.5]{Ka}; see also \cite[p. 307]{DF}). For the
remaining assertions  one just need to recall (\cite[p. 68]{Ry1}
and \cite[3.1]{F}) that when $Y$ is an $\mathcal L_\infty$ space
then also  $\check{\otimes }^n_{\varepsilon,s }{Y} $ is an
$\mathcal{L}_{\infty}$-space and that the equality
$\mathcal{P}_{\mathfrak{K}w^*}(^nY^*,{-})=( \check {\otimes
}^n_{s,\varepsilon } {Y} ) \check {\otimes }_{\varepsilon } {-} $
holds since $Y$ is complemented in
$\check{\otimes }^n_{\varepsilon,s }{Y} $.\\

For the holomorphic case, we consider that the decomposition of
$f$ as a formal sum $\sum_n P_n$ where $P_n\in
\mathcal{P}_{\mathfrak{K}w^*}(^nY^*,{-})$ ( \cite[Prop. 5]{GG1}.) The result follows from the proof of Theorem \ref{ls} .\\

$(3)\Rightarrow (1)$ Given a compact operator $K:X\to Y$, consider a commutative diagram

$$
\begin{CD}
0@>>>X@>i>>\diamondsuit@>Q>> \spadesuit@>>>0\\
 &&@VV{K}V @VV{K_0}V @VV{K_1}V\\
 0@>>> Y @>j>>\ell_{\infty}(\Gamma) @>q>>\ell_{\infty}(\Gamma)/Y @>>> 0,
 \end{CD}$$
where $K_0,K_1$ can be chosen to be compact. A similar argument as above works.\\

$(4)$ This follows from the Proposition \ref{approximation} and Corollary \ref{KW^*}.

\end{proof}

\begin{remark} \
\begin{enumerate}
\item[a)]  The case (3) for $\mathfrak K$ (resp. (2) for
$\mathfrak W$) appear in Domanski \cite[Thm. 4]{dom}, equivalence
between (b.i) and (b.ii) (resp. between (a.i) and (a.ii)).\

\item[b)] Gonz\'alez and Guti\'errez show in \cite[Remark 1]{GG}
that  $\mathcal{P}_{\mathfrak{K}}(^n{-},Y)$ does not necessarily
transforms exact sequences into exact sequences, even when $Y$ is
an $\mathcal L_\infty$-space.
\end{enumerate}
\end{remark}

Dually, it is well-known  that a Banach space is is an $\mathcal
L_1$-space if and only if its dual space is injective (\cite{LR}).
One therefore has.

\begin{theorem}\label{eleuno} Let $Z$ be a Banach space. The following conditions are equivalent
\begin{enumerate}
\item $Z$ is an $\mathcal L_1$-space.\

\item Every exact sequence $0\rightarrow \spadesuit \rightarrow
\diamondsuit \rightarrow Z \rightarrow 0$ locally splits.

\item Every exact sequence $0\rightarrow \spadesuit \rightarrow
\diamondsuit \rightarrow Z \rightarrow 0$ $\mathcal A$-lifts for
any of the choices $\mathcal A = \mathfrak K$,
$\mathcal{P}_{\mathfrak{K}}$  or $\mathcal{H}_{\mathfrak{K}}$.

\item Every exact sequence $0\rightarrow R \rightarrow
\diamondsuit \rightarrow Z \rightarrow 0$ in which $R$ is
reflexive splits.

\item Each of the functors $Z\hat{\otimes }_{\pi }{-} $ ,
$\mathfrak{K}(Z,-)$, $\mathcal{P}_{\mathfrak{K}}(^nZ,-) $ or
$\mathcal{H}_{\mathfrak{K}}(Z,-) $ transform exact sequences into
exact sequences.

\end{enumerate}
\end{theorem}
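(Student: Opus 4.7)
The plan is to route the equivalences through the classical duality $Z\in\mathcal{L}_{1}\iff Z^{*}$ is injective, then feed the resulting local splitting into Theorems~\ref{locallysplit} and \ref{ls} to obtain the lifting assertions, and close the cycle with the reflexive-kernel condition.

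For (1)$\Leftrightarrow$(2) I would combine the classical fact that $Z$ is an $\mathcal{L}_{1}$-space exactly when $Z^{*}$ is injective with Kalton's duality theorem (invoked in Theorem~\ref{locallysplit}) that an exact sequence locally splits iff its dual splits. The dual sequences $0\to Z^{*}\to X^{*}\to Y^{*}\to 0$ run through all short exact sequences starting at $Z^{*}$, so injectivity of $Z^{*}$ is precisely local splitting of every $0\to Y\to X\to Z\to 0$.

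For (1)$\Rightarrow$(3): every $\mathcal{L}_{1}$-space enjoys the metric approximation property, hence BAP, so the hypothesis ``$Z$ has BAP'' in Theorem~\ref{locallysplit}(8) and Theorem~\ref{ls}(8),(9) is automatic. Thus local splitting (provided by (2)) transports to $\mathfrak{K}$-, $\mathcal{P}_{\mathfrak{K}}$-, and $\mathcal{H}_{\mathfrak{K}}$-lifting. For the converse (3)$\Rightarrow$(2) it is enough to handle $\mathcal{A}=\mathfrak{K}$, since the linear case ($n=1$) of the polynomial or holomorphic hypothesis returns the operator hypothesis. Because $\mathfrak{K}$ is closed, $\mathfrak{K}$-lifting coincides with uniform $\mathfrak{K}$-lifting (as noted in the paper). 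Apply this uniform lifting to the inclusions $F\hookrightarrow Z$ of finite-dimensional subspaces (compact of norm $1$): each admits a lift $T_{F}:F\to\diamondsuit$ with $qT_{F}$ the inclusion and $\|T_{F}\|\leq\lambda$. Since $T_{F}$ is defined on a finite-dimensional space it is automatically finite-rank, so the sequence uniformly $\mathfrak{F}$-lifts, and (2) follows by Theorem~\ref{locallysplit}(3).

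For (2)$\Rightarrow$(4): if $0\to R\to\diamondsuit\to Z\to 0$ locally splits and $R$ is reflexive, then $\mathrm{id}_{R}$ is weakly compact. By Theorem~\ref{locallysplit}(5) local splitting is equivalent to $\mathfrak{W}$-splitting, so $\mathrm{id}_{R}$ extends to $T:\diamondsuit\to R$ with $Ti=\mathrm{id}_{R}$; this $T$ is a bounded projection, and the sequence splits. The reverse direction (4)$\Rightarrow$(1) is the classical Lindenstrauss-Rosenthal theorem \cite[Thm. 4.1]{LR}, which I would simply invoke; this is the main obstacle in the sense that reproducing the $\mathcal{L}_{1}$-structure from the reflexive-kernel splitting property is genuinely nontrivial and the paper rightly cites \cite{LR} for it.

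Finally, for (1)$\Leftrightarrow$(5): the functor $Z\hat\otimes_{\pi}-$ is exact because $Z$ is $\mathcal{L}_{1}$, the model case $\ell_{1}(\Gamma)\hat\otimes_{\pi}X=\ell_{1}(\Gamma,X)$ being manifestly exact in $X$ and the $\mathcal{L}_{1}$ local structure reducing to it. The remaining three functors reduce to tensor-product representations: $\mathfrak{K}(Z,X)=Z^{*}\check{\otimes}_{\varepsilon}X$ since $Z^{*}$ is $\mathcal{L}_{\infty}$ and thus has BAP, and $\mathcal{P}_{\mathfrak{K}}(^{n}Z,X)=\mathcal{P}_{\texttt{A}}(^{n}Z,X)=(\check{\otimes}^{n}_{\varepsilon,s}Z^{*})\check{\otimes}_{\varepsilon}X$ since $Z$ has BAP, using the symmetric injective tensor identification from \cite[5.3]{DF}. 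As $Z^{*}$ is $\mathcal{L}_{\infty}$ and symmetric injective tensor powers of $\mathcal{L}_{\infty}$-spaces are again $\mathcal{L}_{\infty}$ (as used in the proof of Theorem~\ref{eleinf}), Theorem~\ref{eleinf}(4) delivers the exactness. The holomorphic case assembles from the Taylor-series decomposition as in the proof of Theorem~\ref{ls}. The converse direction, (5)$\Rightarrow$(2), is immediate on setting the tensor factor equal to $\mathbb{K}$, which recovers the original sequence.
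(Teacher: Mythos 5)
Your overall architecture matches the paper's (admittedly very terse) proof: route (1)$\Leftrightarrow$(2) through injectivity of $Z^{*}$ and Kalton's duality, get (3) from Theorems~\ref{locallysplit} and \ref{ls} using the automatic BAP of $\mathcal{L}_{1}$-spaces, get (4) from $\mathfrak{W}$-splitting, and get (5) from tensor representations of the functors. Your treatment of (3)$\Rightarrow$(2) via uniform $\mathfrak{K}$-lifting of the finite-dimensional inclusions $F\hookrightarrow Z$ is in fact more careful than the paper's bare appeal to Theorem~\ref{ls}, since in that direction one cannot yet assume $Z$ has the BAP. For (4)$\Rightarrow$(1) you outsource the work to \cite{LR}; the paper's intended (and more self-contained) route is the Davis--Figiel--Johnson--Pe\l czy\'nski factorization: given any sequence $0\to Y\to X\to Z\to 0$ and a weakly compact $t:Y\to E$, factor $t$ through a reflexive $R$, push out along $Y\to R$ to get a sequence with reflexive kernel, split it by (4), and conclude $\mathfrak{W}$-splitting, hence (2) by Theorem~\ref{locallysplit}(5). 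Both routes are legitimate, but the DFJP one uses only machinery already set up in the paper.

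There is, however, a genuine gap at the very end: your claim that (5)$\Rightarrow$(2) ``is immediate on setting the tensor factor equal to $\mathbb{K}$'' is vacuous. In condition (5) the space $Z$ is the \emph{fixed} factor and the variable is the exact sequence to which the functor $Z\hat{\otimes}_{\pi}{-}$ (or $\mathfrak{K}(Z,-)$, etc.) is applied; there is no universally quantified auxiliary space available to specialize to $\mathbb{K}$. You appear to be transplanting the closing step of Proposition~\ref{approximation}, but there the roles are reversed (the sequence is fixed, $V$ varies) and, crucially, conditions (3) and (4) of that proposition contain the extra clause ``and locally splits,'' which is what survives the specialization $V=\mathbb{K}$. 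Here, specializing only recovers the tautology that an exact sequence is exact, and tells you nothing about $Z$. The implication (5)$\Rightarrow$(1) is precisely where the content of the citation \cite[3.9]{DF} lies (exactness of the functor $Z\hat{\otimes}_{\pi}{-}$ on all short exact sequences forces $Z$ to be an $\mathcal{L}_{1}$-space); you must either invoke that result, as the paper does, or supply an actual argument, e.g.\ by dualizing the tensorized sequences via $(Z\hat{\otimes}_{\pi}W)^{*}=\mathfrak{L}(Z,W^{*})$ and deducing injectivity of $Z^{*}$. As written, your cycle of implications does not close.
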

\begin{proof}
The equivalence between (1) and (2) is clear, and
since $\mathcal L_1$ spaces have the BAP, the equivalence with (3)
is also clear by Theorem \ref{ls}. (4) is consequence of the $\mathfrak W$-splitting
and the Davis-Figiel-Johnson-Pe\l czy\'nski factorization of
weakly compact operators \cite[9.6]{DF}.  The equivalence $(1)$ and $(5)$ follows
from \cite[3.9]{DF}. The proof for
$\mathcal{P}_{\mathfrak{K}}(^nZ,-) $ follows from the equality
$\mathcal{P}_{\mathfrak{K}}(^{n}Z,-)=((\check{\otimes
}^n_{\varepsilon,{s} }Z^*)\check{\otimes }_{\varepsilon }-)$
and from the fact that $(\check{\otimes }^n_{\varepsilon,{s}
}Z^*)$ is an $\mathcal L_{\infty}$-space (\cite[p. 68]{Ry1} and \cite[3.1]{F}).
\end{proof}

Observe that (4) has no analogue for $\mathcal L_\infty$-spaces
since $\mathcal {L}_{\infty}$-spaces are not characterized by
${\mathfrak{W}}$-lifting, as previous examples show. $\mathcal
L_{1}$-spaces are not characterized by $\mathfrak W$-lifting
either: given a sequence $ 0 \to \ker q \to \ell_1 \stackrel{q}\to
L_{1}[0,1] \to 0$, no embedding $\ell_2\rightarrow L_{1}[0,1]$ can
be lifted through $q$ since every operator $\ell_2\to \ell_1$ must
be compact.

\section{Extension vs. lifting for operator ideals}

The difference between $\mathfrak K$-splitting and $\mathfrak
K$-lifting was remarked by Theorem \ref{locallysplit}.  In the proof of Theorem \ref{eleinf} commutative diagrams
such as
$$
\begin{CD}
0@>>>Y_1@>i>>X_1@>Q>>Z_1@>>>0\\
 &&@VVTV @VVSV @VVRV\\
 0@>>> Y @>j>> X @>q>> Z @>>> 0\\
 \end{CD}
$$appeared. It is well known that for such diagrams
\cite[1.2, 1.3]{CG} --see also \cite[Prop. 1]{dom}-- the operator
$R$ can be lifted to an operator $Z_1 \to X$ if and only if $T$
can be extended to an operator $X_1\to Y$. In the proof of Theorem
\ref{eleinf} there was moreover exhibited a certain symmetry
between ``$R$ admits a compact lifting" and ``$T$ admits a compact
extension". One could ask about the exact nature of such
extension/lifting behaviour and if the same happens for other
operator ideals.

\adef An operator ideal $\mathfrak A$ will be called balanced when
given a commutative diagram
$$
\begin{CD}
0@>>> \ker q@>>>\ell_1(\Gamma)@>q>>Z@>>>0\\
 &&@V{\varphi}VV @VVV \Vert\\
0@>>>Y@>>>X@>>> Z@>>>0\\
 &&\Vert && @VVV @VV{\psi}V \\
 0@>>> Y @>j>> \ell_{\infty}(\Lambda) @>Q>> \ell_{\infty}(\Lambda)/Y @>>>
 0\end{CD}$$ the operator $\varphi$ can be chosen in $\mathfrak A$ if and only
 if $\psi$ can be chosen in $\mathfrak A$.\zdef

Observe that this does not mean that $\psi$ must be in $\mathfrak
A$ whenever $\varphi$ is in $\mathfrak A$; it rather means that
whenever $\varphi$ is in $\mathfrak A$ the operator $j\varphi$
admits an extension $\ell_1(\Gamma) \to \ell_\infty(\Lambda)$
whose induced operator $Z\to \ell_\infty(\Lambda)/Y$ is in
$\mathfrak A$ and viceversa: whenever $\psi$ is in $\mathfrak A$
the operator $\psi q$ admits a lifting $\ell_1(\Gamma) \to
\ell_\infty(\Lambda)$ whose restriction $\ker q \to Y$ is in
$\mathfrak A$.

For reasons hidden in the homological roots of the problem (see
Section \ref{homo}) this notion is interesting only when the ideal
$\mathfrak A$ is injective and surjective (see \cite{Pi}; see also
Section \ref{homo}). Classical injective and surjective operator
ideals appearing in the literature (see \cite{Pi}) are the ideals
$\mathfrak F$; $\mathfrak K$; $\mathfrak W$; $\mathfrak U$
(unconditional summing operators); $\mathfrak R$ (Rosenthal
operators) and $\mathfrak X$ (separable range operators). We
determine now their balanced character.

\begin{proposition}\label{idealbalanced}
The ideal $\mathfrak K$ is balanced. The ideals $\mathfrak W$,
$\mathfrak U$, $\mathfrak X$ and $\mathfrak R$ are not balanced.
\end{proposition}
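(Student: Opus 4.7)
The plan is to separate the proposition into its positive half ($\mathfrak K$ is balanced) and its four negative halves, and to handle them by rather different means. Denoting the middle row by $0\to Y\xrightarrow{\iota} X\xrightarrow{\pi} Z\to 0$, for $\mathfrak K$ the proof is a symmetric ``modify the lifting / modify the extension'' argument powered by the two Lindenstrauss--Rosenthal principles already used in the proof of \thmref{eleinf}: compact operators into an $\mathcal{L}_\infty$-space extend compactly across subspaces, and compact operators out of an $\mathcal{L}_1$-space lift compactly through surjections. Both apply to the outer rows of the defining diagram, since $\ell_\infty(\Lambda)$ is $\mathcal L_\infty$ and $\ell_1(\Gamma)$ is $\mathcal L_1$. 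For the forward implication, fix a lifting $t\colon\ell_1(\Gamma)\to X$ of $q$ through $\pi$ and an initial extension $e_0\colon X\to\ell_\infty(\Lambda)$ of $j$, and assume $\varphi = t|_{\ker q}$ is compact. The compact map $j\varphi$ extends to a compact $\tilde u\colon\ell_1(\Gamma)\to\ell_\infty(\Lambda)$; since $e_0 t-\tilde u$ vanishes on $\ker q$, it factors as $w\circ q$ for some $w\colon Z\to\ell_\infty(\Lambda)$, and $e := e_0 - w\pi$ is another extension of $j$ with $et=\tilde u$ compact, forcing $\psi\circ q = Q\tilde u$ and hence $\psi$ to be compact. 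The backward implication is entirely dual: from compact $\psi$, lift $\psi q$ through $Q$ to a compact $\tilde u$ by the $\mathcal L_1$-principle, write $et-\tilde u=j v$ with $v\colon\ell_1(\Gamma)\to Y$, and replace $t$ by $t-\iota v$ to obtain a new lifting whose $\varphi' = \varphi - v|_{\ker q}$ is compact (as seen by restricting $\tilde u$ to $\ker q$).

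The four negative claims require counterexamples, and the strategy is driven by the intrinsic asymmetry of the defining diagram: $\ker q$, as a subspace of $\ell_1(\Gamma)$, inherits the Schur property, contains no copy of $c_0$, is stuffed with copies of $\ell_1$, and is separable whenever $\Gamma$ can be taken countable; meanwhile, the outer quotient $\ell_\infty(\Lambda)/Y$ is typically large, non-Schur, and rich in $c_0$- and $\ell_\infty$-type obstructions. For $\mathfrak W$ this already produces: every weakly compact $\varphi$ is automatically compact, so one seeks a non-trivial locally splitting sequence whose $Y$ is an $\mathcal L_\infty$-space (hence some compact $\varphi$ exists) and whose $Z$ is non-reflexive, for which no choice of extension renders $\psi$ weakly compact. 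The natural candidate is $0\to c_0\to\ell_\infty\to\ell_\infty/c_0\to 0$; the crux is to verify that $\mathrm{id}_{\ell_\infty/c_0}$ cannot be written as $Q\circ w+(\text{weakly compact})$ for any $w\colon\ell_\infty/c_0\to\ell_\infty$, an assertion that should fall out of the Grothendieck property of $\ell_\infty/c_0$ together with the fact that weakly compact operators into $\ell_\infty$ have separable range. For $\mathfrak U$, one picks a sequence in which $Z$ contains a complemented copy of $c_0$ that $\psi$ is forced to fix while $\ker q$ contains none. For $\mathfrak X$, one takes $Y$ separable (trivially forcing $\varphi$ to have separable range) and $Z$ non-separable in a way that makes $\psi$ have non-separable range regardless of modification. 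For $\mathfrak R$, one takes $Z$ free of $\ell_1$-copies (so $\psi$ lies automatically in $\mathfrak R$) while $\ker q\subset\ell_1(\Gamma)$ is forced to contain $\ell_1$-copies that $\varphi$ must fix.

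The main obstacle is the verification, ideal by ideal, that the ``hard'' side genuinely resists modification: for $\varphi$, that its residue modulo $\mathfrak L(\ell_1(\Gamma),Y)|_{\ker q}$ does not meet the ideal, and for $\psi$, that its residue modulo $Q\circ\mathfrak L(Z,\ell_\infty(\Lambda))$ does not. Each verification rests on the fine structure of a specific non-trivial exact sequence from the literature (Ribe, Johnson--Lindenstrauss, Bourgain--Pisier, etc.) combined with an ideal-theoretic obstruction (Grothendieck property, Rosenthal dichotomy, separable-range rigidity), and that computation is where the genuine work of Section 6 lies.
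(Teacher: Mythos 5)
Your first half is fine: the argument that $\mathfrak K$ is balanced, via Lindenstrauss's compact extension into $\ell_\infty(\Lambda)$ in one direction and the Lindenstrauss--Rosenthal compact lifting from $\ell_1(\Gamma)$ plus a correction of the lifting in the other, is correct and is a clean (slightly more symmetric) variant of the paper's proof, which instead uses a Bartle--Graves selection for the ``$\psi$ compact $\Rightarrow\varphi$ compact'' direction. The problem is the other four claims, which are the bulk of the proposition: you supply strategies rather than counterexamples, and you acknowledge that the actual verifications are left undone. For $\mathfrak U$ and $\mathfrak X$ your sketch does match the paper's route (Kalton's nontrivial $C[0,1]$-by-$c_0$ sequence together with $\mathfrak U(c_0,\cdot)=\mathfrak K(c_0,\cdot)$; the Johnson--Lindenstrauss $c_0$-by-$\ell_2(\Gamma)$ sequence together with Sobczyk), so there the gap is ``only'' that the decisive steps are missing.

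For $\mathfrak W$, however, the plan as written would fail. First, ``every weakly compact $\varphi$ is automatically compact'' because $\ker q$ has the Schur property is the wrong direction of that implication: the formal inclusion $\ell_1\to\ell_2$ is weakly compact and non-compact, so weak compactness of an operator \emph{out of} a Schur space gives nothing; what the paper uses is that weakly compact operators \emph{into} a Schur space are compact. Second, and fatally, for your candidate $0\to c_0\to\ell_\infty\to\ell_\infty/c_0\to 0$ \emph{no} compact $\varphi$ exists: by the $\mathfrak K$-balancedness you have just established, a compact $\varphi$ would produce a compact $\psi$ into $\ell_\infty(\Lambda)/c_0$, which is an $\mathcal L_\infty$-space with the BAP sitting in a locally split sequence, so by Theorem \ref{locallysplit}(8) that $\psi$ lifts and the middle sequence splits --- contradicting Phillips. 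Hence ``$Y$ is $\mathcal L_\infty$, hence some compact $\varphi$ exists'' is a non sequitur; for a nontrivial sequence with $\mathcal L_\infty$ kernel the compact connecting maps never exist. The paper's example runs the asymmetry the other way: it takes $Z=\ell_2$ reflexive so that \emph{every} $\psi$ is weakly compact, and uses the Bourgain--Pisier embedding of $D=\ker(\ell_1\to\ell_2)$ into a Schur $\mathcal L_\infty$-space $\mathcal L_\infty(D)$ so that a weakly compact $\varphi$ would be compact, would extend, and would split the (non-split, by a three-space argument) pushout sequence. The same Bourgain--Pisier device is what makes the $\mathfrak R$ counterexample work, and your sketch for $\mathfrak R$ does not identify it. So the negative half of the proposition is not established by your proposal.
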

\begin{proof}

\emph{The ideal $\mathfrak K$ of compact operators is balanced}.
Assume that $\psi$ is compact, which makes $\psi q$ also compact.
An observe that an operator $T: \ell_1(\Gamma)\to X$ is compact if
and only if for every countable part $\N\subset \Gamma$ the
restriction $T_{|\ell_1(\N)}$ is compact. Which occurs if and only
if $\{T(e_n)\}_n$ is a relatively compact set. In conclusion, that
an operator $Ti: \ell_1(\Gamma)\to X$ is compact if and only if
given any sequence of $e_i's$ there is a subsequence $(e_j)$ for
which $(T e_j)$ converges. Let $\omega$ be a continuous
(non-linear) selection for the quotient map $\ell_\infty(\Lambda)
\to \ell_\infty(\Lambda)/Y$, which exists by the Bartle-Graves
selection theorem \cite[p. 52]{CG}. The operator $\Phi:
\ell_1(\Gamma) \to \ell_\infty(\Lambda)$ defined by
$\Phi(e_i)=\omega \varphi q e_i$ is therefore compact, and its
restriction to $\ker q$ yields a compact operator $\varphi:\ker q
\to Y$ that makes commutative the diagram above. Conversely,
assume that $\varphi$ is compact. Lindenstrauss's
characterizations of $\mathcal L_\infty$ spaces \cite{L} yield a
compact extension $K: \ell_1(\Gamma) \to \ell_\infty(\Lambda)$ of
$j\varphi$. The operator $QK$ factorizes as $QK = \psi q$ with
$\psi$ compact.\\

\emph{The ideal $\mathfrak W$ of weakly compact operators is not
balanced}. Using a result of Bourgain and Pisier \cite[Thm.
2.1]{BP}, every separable Banach space $X$ can be embedded into a
separable $\mathcal L_\infty$-space $\mathcal L_\infty(X)$ in such
a way that the corresponding quotient $\mathcal L_\infty(X)/X$ has
the Schur property (namely, weakly convergent sequences are norm
convergent). Consider then an exact sequence $0 \to D \to \ell_1
\to \ell_2 \to 0$, apply the Bourgain-Pisier construction to $D$
and combine both results in a push-out diagram$$
\begin{CD}
&&0&=&0\\
&&@VVV @VVV\\
  0@>>> D @>>> \ell_1 @>>> \ell_2 @>>> 0\\
 & &@VVV @VVV \|\\
 0 @>>> \mathcal L_\infty(D)@>>> PO @>>> \ell_2 @>>>0\\
 & &@VVV @VVV\\
 & &S &=&S\\
 &&@VVV @VVV\\
 &&0&=&0\end{CD}
$$ By a standard 3-space argument (see \cite[Thm. 6.1.a]{CG}) the space
$PO$ has the Schur property, hence the sequence  $0 \to \mathcal
L_\infty(D) \to PO \to \ell_2 \to 0$ cannot split. Consider a
commutative diagram$$
\begin{CD}
0@>>> \ker q@>>>\ell_1@>q>>\ell_2@>>>0\\
 &&@V{\varphi}VV @VVV \Vert\\
0@>>>\mathcal L_\infty(D)@>>>PO@>>> \ell_2@>>>0\\
 &&\Vert && @VVV @VV{\psi}V \\
 0@>>> \mathcal L_\infty(D) @>j>> \ell_{\infty} @>Q>> \ell_{\infty}/\mathcal L_\infty(D) @>>>
 0.\end{CD}$$
The operator $\psi$ is always weakly compact. However, the
operator $\varphi$ cannot be weakly compact: since the space
$\mathcal L_\infty(D)$ has the Schur property, every weakly
compact operator with range $\mathcal L_\infty(D)$ must be
compact, and could therefore be extended to
any separable superspace. This would make the middle sequence split.\\

\emph{The ideal $\mathfrak U$ of unconditionally summing operators
is not balanced.} Recall that an operator $t: X\to Y$ belongs to
$\mathfrak U(X,Y)$ (i.e., it is unconditionally summing) if and
only if it is never an isomorphism on a copy of $c_0$. Recall from
\cite{KA}, or else \cite{CCKY}, examples of nontrivial sequences
$$\begin{CD} 0@>>> C[0,1]@>>> X @>>>c_0@>>>0\end{CD}.$$
Consider then a commutative diagram
$$
\begin{CD}
0@>>> \ker q@>>>\ell_1@>q>>c_0@>>>0\\
 &&@V{\varphi}VV @VVV \Vert\\
0@>>>C[0,1]@>>>X@>>> c_0@>>>0\\
 &&\Vert && @VVV @VV{\psi}V \\
 0@>>> C[0,1] @>j>> \ell_{\infty} @>Q>> \ell_{\infty}/C[0,1] @>>>
 0.\end{CD}$$
The operator $\varphi$ is always unconditionally summing since
$\mathfrak L(\ell_1, Y) = \mathfrak U(\ell_1, Y)$ for every Banach
space $Y$. On the other hand, every operator on $c_0$ is either
weakly compact or an isomorphism on a copy of $c_0$ (see \cite{LT});
i.e., $\mathfrak U(c_0,Y) = \mathfrak W(c_0,Y)$ for every Banach
space $Y$; but weakly compact operators on $c_0$ must be compact,
hence  $\mathfrak U(c_0,Y) = \mathfrak K(c_0,Y)$ for every Banach
space $Y$. This means that every unconditionally summing $\psi$
must be compact, hence, by Theorem \ref{eleinf}, it can be lifted
to an operator $c_0 \to \ell_\infty$, which means that the middle
sequence splits.\\

\emph{The ideal $\mathfrak X$ of separable range operators is not
balanced}. It is well-known the existence of nontrivial exact
sequences (see \cite{JL})
$$
\begin{CD}
0@>>>c_0@>>>X@>>>\ell_2(\Gamma)@>>>0.
\end{CD}$$ Consider a commutative diagram$$
\begin{CD}
0@>>> \ker q@>>>\ell_1(\Gamma)@>q>>\ell_2(\Gamma)@>>>0\\
 &&@V{\varphi}VV @VVV \Vert\\
0@>>>c_0 @>>>X@>>> c_0@>>>0\\
 &&\Vert && @VVV @VV{\psi}V \\
 0@>>> c_0 @>j>> \ell_{\infty} @>Q>> \ell_{\infty}/c_0 @>>>
 0.\end{CD}$$
Every $\varphi$ with range $c_0$ has separable range. On the other
hand, if some $\psi$ has separable range $S$ one gets a
commutative diagram
$$
\begin{CD}
0@>>> c_0@>>>\ell_\infty @>>>\ell_\infty/c_0 @>>>0\\
&&@| @AAA @AAA\\
0@>>> c_0@>>>W @>>> S @>>>0\\
&&@| @AAA @AA{\psi}A\\
0@>>>c_0@>>>X@>>> \ell_2(\Gamma)@>>>0.
 \end{CD}$$
But Sobczyk's theorem \cite{ccy} yields that the middle sequence
splits; hence the lower sequence must also
split.\\

\emph{The ideal $\mathfrak R$ of Rosenthal operators is not
balanced}. Recall that an operator $t$ is said to be a Rosenthal
operator if it maps bounded sequences into sequences admitting
weakly Cauchy subsequences. Consider an exact sequence $0\to \ker
q \to \ell_1 \stackrel{q}\to c_0 \to 0$ and apply the
Bourgain-Pisier construction \cite[Thm. 2.1]{BP} to $\ker q$ to
get an exact sequence $0\to \ker q \to \mathcal L_\infty(\ker q)
\to S \to 0$ in which $S$ has the Schur property. Combine all this
in a commutative push-out diagram$$
\begin{CD}
&&0&=&0\\
&&@VVV @VVV\\
  0@>>> \ker q @>>> \ell_1 @>>> c_0 @>>> 0\\
 & &@VVV @VVV \|\\
 0 @>>> \mathcal L_\infty(\ker q)@>>> PO @>>> c_0 @>>>0\\
 & &@VVV @VVV\\
 & &S &=&S\\
 &&@VVV @VVV\\
 &&0&=&0\end{CD}
$$ By a standard 3-space argument (see \cite[Thm. 6.1.a]{CG}) the space
$PO$ has the Schur property, hence the sequence  $0 \to \mathcal
L_\infty(\ker q) \to PO \to c_0\to 0$ cannot split. Consider now a
commutative diagram
$$
\begin{CD}
0@>>> \ker q@>>>\ell_1@>q>>c_0@>>>0\\
 &&@V{\varphi}VV @VVV \Vert\\
0@>>>\mathcal L_\infty(\ker q)@>>>PO@>>> c_0@>>>0\\
 &&\Vert && @VVV @VV{\psi}V \\
 0@>>> \mathcal L_\infty(\ker q) @>j>> \ell_{\infty} @>Q>> \ell_{\infty}/\mathcal L_\infty(\ker q) @>>>
 0.\end{CD}$$
On one hand, $\mathfrak L(c_0,Y)=\mathfrak R(c_0, Y)$ for every
Banach space $Y$. Hence $\psi$ must be a Rosenthal operator. On
the other hand, assume that some $\varphi$ is a Rosenthal
operator. Since weakly Cauchy sequences in Schur spaces are weakly
convergent, and weakly convergent sequences are convergent, the
operator $\varphi$ must be compact. Since the ideal $\mathfrak K$
is balanced, there must be some compact $\psi$ in the diagram
above. But since $\ell_\infty/\mathcal L_\infty(K)$ is an
$\mathcal L_\infty$-space \cite[Prop. 5.2.(c)]{LR}, it has the
BAP, and thus Proposition \ref{bap} yields that $\psi$ can be
lifted to an operator $\mathfrak L(c_0, \ell_\infty)$, which in
turn means that the middle sequence must split.
\end{proof}

\section{Appendix. The homological language}\label{homo}

Several of our results admit a homological formulation, which can
be clearer for those familiar with the language and methods of
homological algebra. We make now a brief exposition.

\subsection{Exact functors} Let $\mathcal A$ be a suitable space of functions/operators. Given
a fixed Banach space $X$ the functor $\mathcal A(X, {-})$ assigns
to a Banach space $Y$ the space $\mathcal A(X, Y)$ and to each
linear continuous operator $\tau: Y \to Y_1$ the ``composition-by
-the-left" map, namely $\tau_\circ: \mathcal A(X, Y) \To \mathcal
A(X, Y_1)$ defined by $\tau_\circ(f) = \tau \circ f$. The fact
that the direction of arrows is preserved is usually remarked by
saying that the functor is covariant. Dually, for fixed $Y$ one
has the contravariant functor $\mathcal A({-}, Y)$ assigning to
each Banach space $X$ the space $\mathcal A(X, Y)$ and to each
linear continuous operator $\tau: X \to X_1$ the ``composition-by
-the-right" map, namely $\tau^\circ: \mathcal A(X_1, Y) \To
\mathcal A(X, Y)$ defined by $\tau^\circ(f) = f \circ \tau$.\\

A  functor $\texttt{F}$ is said to be left-exact (resp.
right-exact) when given an exact sequence $0\rightarrow Y
\rightarrow X \rightarrow Z \rightarrow 0$ the induced sequence
$0\rightarrow \texttt{F}(Y) \rightarrow \texttt{F}(X) \rightarrow
\texttt{F}(Z)$ (resp. $\texttt{F}(Y) \rightarrow \texttt{F}(X)
\rightarrow \texttt{F}(Z) \rightarrow 0$) is exact. For instance,
it is well known  (see \cite{DF,Ka}) that the
functor $V\check{\otimes }_{\varepsilon }{-} $ (resp.
$V\hat{\otimes }_{\pi }{-} $) is left-exact (resp. right-exact);
or that given an injective and surjective operator ideal (see
\cite{Pi}) $\mathcal A$ the functor $\mathcal A(X, {-})$ is
left-exact and the functor $\mathcal A({-}, Y)$ is right-exact.

\adef We say that a functor $\texttt F$ is \emph{semi-exact} if it
transforms locally splitting sequences into exact sequences. We
will say it is \emph{locally exact} if it transforms locally
splitting sequences into locally splitting sequences.\zdef

Theorem \ref{locallysplit} and Proposition \ref{approximation}
immediately yield.

\begin{proposition} Given a Banach space $V$ one has:
\begin{itemize}
\item The functors $\mathfrak K({-}, V)$ and $\mathfrak W({-}, V)$
are semi-exact.\item The functors $\texttt{A}(V, {-})$ and
$\texttt{A}({-}, V)$ are locally-exact.
\end{itemize}
\end{proposition}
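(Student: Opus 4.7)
The plan is to reduce each of the four claims to results already established in the paper, so that no genuinely new argument is required. The key observation is the standard identification
$$\texttt{A}(X,Y)\;=\;X^{*}\check{\otimes}_{\varepsilon}Y,$$
which is the $n=1$ instance of the identity from \cite[5.3]{DF} already invoked in the paper.

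Fix a locally splitting exact sequence $0\to Y\stackrel{i}\to X\stackrel{q}\to Z\to 0$. Applying the contravariant functor $\mathfrak K(-,V)$ produces
$$0\to \mathfrak K(Z,V)\stackrel{q^{\circ}}\to \mathfrak K(X,V)\stackrel{i^{\circ}}\to \mathfrak K(Y,V)\to 0.$$
Injectivity of $q^{\circ}$ is immediate from $q$ being surjective. Exactness in the middle is a routine ideal-property argument: if $T\in \mathfrak K(X,V)$ vanishes on $Y$, then $T$ factors through an operator $\bar T\colon Z\to V$, and the open mapping theorem gives $\bar T(B_Z)\subseteq T(C\,B_X)$ for some $C>0$, so $\bar T$ is compact. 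Finally, surjectivity of $i^{\circ}$ is exactly $\mathfrak K$-splitting of the original sequence, which is ensured by Theorem \ref{locallysplit}(4). The same scheme combined with Theorem \ref{locallysplit}(5) yields semi-exactness of $\mathfrak W(-,V)$.

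For the covariant functor $\texttt{A}(V,-)$, the identification above gives $\texttt{A}(V,Y)=V^{*}\check{\otimes}_{\varepsilon}Y$, so the functor coincides with $V^{*}\check{\otimes}_{\varepsilon}(-)$; Proposition \ref{approximation}, implication $(2)\Rightarrow(3)$, shows directly that the induced sequence is exact and locally splits. For the contravariant functor $\texttt{A}(-,V)$ one has $\texttt{A}(X,V)=X^{*}\check{\otimes}_{\varepsilon}V$, so it coincides with $(-)^{*}\check{\otimes}_{\varepsilon}V$. By Kalton's duality theorem (\cite[Thm.~3.5]{K}, already invoked in the proof of Theorem \ref{locallysplit}), local splitting of the original sequence forces the dual sequence $0\to Z^{*}\to X^{*}\to Y^{*}\to 0$ to actually split; since the injective tensor product preserves finite direct sums, the resulting sequence
$$0\to Z^{*}\check{\otimes}_{\varepsilon}V\to X^{*}\check{\otimes}_{\varepsilon}V\to Y^{*}\check{\otimes}_{\varepsilon}V\to 0$$
splits, and hence in particular locally splits.

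No essential obstacle arises; the only care needed is to keep track of arrow directions and to recognize each of the four functors either as a hom-functor whose exactness behaviour has just been determined in Theorem \ref{locallysplit}, or as a tensor product with a fixed space whose behaviour is controlled by Proposition \ref{approximation}.
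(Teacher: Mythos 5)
Your proof is correct and follows essentially the route the paper intends, since the paper offers no argument beyond asserting that Theorem \ref{locallysplit} and Proposition \ref{approximation} ``immediately yield'' the statement: you obtain semi-exactness of $\mathfrak K({-},V)$ and $\mathfrak W({-},V)$ from items (4) and (5) of Theorem \ref{locallysplit} plus the routine surjectivity-of-the-ideal check in the middle, and local exactness of $\texttt{A}(V,{-})$ and $\texttt{A}({-},V)$ from the identification $\texttt{A}(X,Y)=X^{*}\check{\otimes}_{\varepsilon}Y$ together with Proposition \ref{approximation} (respectively, Kalton's duality theorem applied to the dual sequence). The only point worth recording is that the isometric identification of $\texttt{A}(X,Y)$ with $X^{*}\check{\otimes}_{\varepsilon}Y$ needs no approximation hypothesis (Goldstine's theorem gives equality of the $\varepsilon$-norm and the operator norm on $X^{*}\otimes Y$), which you implicitly and correctly use.
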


We have shown (combine Proposition \ref{approximation}, Theorem \ref{ls} and Aron-Berner extension) that
\begin{corollary} The functors $(V\check{\otimes}_{\varepsilon }{-})$ and
$(V\hat{\otimes }_{\pi }{-})$ are semi-exact if and only if they
are locally exact.
\end{corollary}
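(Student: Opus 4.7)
The plan is to verify the two directions separately, and observe that the substantive work has already been done in Proposition \ref{approximation}.

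The implication \emph{locally exact implies semi-exact} is immediate from the definitions: every locally splitting short exact sequence is in particular exact, so a functor that sends locally splitting sequences to locally splitting sequences automatically sends them to exact sequences. This requires no assumptions beyond unwinding the definitions given just before the statement.

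For the converse, I would argue that no hypothesis is really being used: the conclusion follows directly from Proposition \ref{approximation}. Given any locally splitting sequence $0\to Y\to X\to Z\to 0$, applying the implication $(1)\Rightarrow (3)$ of that proposition shows that for every Banach space $V$ the tensorized sequence
$$0\to V\check{\otimes}_{\varepsilon}Y\to V\check{\otimes}_{\varepsilon}X\to V\check{\otimes}_{\varepsilon}Z\to 0$$
is not merely exact but also locally splits; this is precisely what local exactness of $V\check{\otimes}_{\varepsilon}{-}$ demands. The analogous implication $(1)\Rightarrow (4)$ handles the projective case. Consequently, once semi-exactness holds (which here comes for free), local exactness is automatic.

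The main point to emphasize is that the corollary is really a functorial repackaging of the equivalences $(2)\Leftrightarrow(3)\Leftrightarrow(4)$ in Proposition \ref{approximation}, whose proof relied on the identifications $(V\check{\otimes}_{\varepsilon}Z)^{*}=\mathcal{I}(V,Z^{*})$ and $(V\hat{\otimes}_{\pi}Z)^{*}=\mathfrak{L}(V,Z^{*})$ together with a section $r$ for $q^{*}$. Since that machinery has already been deployed, no further obstacle remains; the corollary amounts to noting that both the semi-exact and locally exact properties of these two tensor functors are equivalent reformulations of the same underlying fact about locally splitting sequences.
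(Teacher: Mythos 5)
Your proposal is correct and follows essentially the same route as the paper, which likewise derives the corollary by pointing at Proposition \ref{approximation}: the implications $(2)\Rightarrow(3)$ and $(2)\Rightarrow(4)$ show that each tensor functor sends every locally splitting sequence to an exact \emph{and} locally splitting sequence, so both properties hold unconditionally and the equivalence is immediate, while the reverse implication is definitional. Your observation that the statement is really a repackaging of $(2)\Leftrightarrow(3)\Leftrightarrow(4)$ matches the paper's intent exactly.
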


\subsection{Derivation} In many concrete situations, the involved functors
$\mathcal A(Z, {-})$ (resp. $\mathcal A({-},Y)$) are not exact.
Homology theory was developed to study this case. The measure of
how far a functor $\texttt F$ is from being exact is given by the
so-called \emph{derived} functor of $\texttt F$. The derivation
process  can be performed in different forms: using projective
presentations (when they exist), injective presentations (when
they exist), pull-back and push-out constructions, ...
\cite{HS,CG}.

Such is the case when one considers the category of Banach spaces
and the functor(s) induced by the operator ideal $\mathfrak L$,
namely $\mathfrak L(X, {-})$ and $\mathfrak L({-}, X)$. In which
case the derived functor(s) are called $\Ext$. See \cite{CC} for a
detailed exposition. All this means that if one has a Banach space
$Z$ and fixes a ``projective presentation" of $Z$; namely, an
exact sequence $0 \to \ker q \to \ell_1(I) \stackrel{q}\to Z \to
0$, and then fixes another Banach space $Y$ then there exists an
exact sequence
$$
0\longrightarrow\mathfrak L(Z, Y) \longrightarrow \mathfrak
L(\ell_1(I), Y) \longrightarrow \mathfrak L(\ker q,
Y)\longrightarrow \Ext_{\mathfrak L}^p(Z,Y)\longrightarrow 0.$$
This  can be taken as the definition of the space $\Ext_{\mathfrak
L}^p(Z,Y)$, which can be called the first derived functor of
$\mathfrak L({-}, {-})$; or, to be more precise, the first
projectively derived functor of $\mathfrak L({-}, {-})$. If,
instead, one fixes an ``injective presentation" of $Y$; namely,
exact sequences $0 \to Y \to \ell_\infty(\Lambda) \to Q \to 0$,
then there also exists an exact sequence
$$
0\longrightarrow \mathfrak L(Z, Y ) \longrightarrow \mathfrak L(Z,
\ell_\infty(\Lambda)) \longrightarrow \mathfrak L(Z, Q)
\longrightarrow \Ext_{\mathfrak L}^i(Z,Y)\longrightarrow 0.$$ Here
$\Ext_{\mathfrak L}^i({-}, {-})$ can be called the first derived
(injectively derived, to be precise) functor of $\mathfrak L({-},
{-})$. Classical homology theory starts with the fact \cite{HS}
that injective and projective derivations coincide; i.e.:
$$\Ext_{\mathfrak L}^p(Z,Y) = \Ext_{\mathfrak L}^i(Z,Y).$$ In fact,
a non-written rule \cite{HS} asserts that when one is working
in a reasonable category in which injective and projective
presentations exist then the derived functors obtained via
injective presentations and via projective presentations
coincide.\\

What happens when the functor to be derived is not the one induced
by the ideal $\mathfrak L$, but is induced by other operator
ideals $\mathcal A$. Let us show that making relative homology
with respect to an operator ideal requires to distinguish between
injective and projective derivation. Indeed, one needs to ask the
operator ideal to be injective (see \cite{Pi}) in order to compute
the relative $\Ext$ via injective presentations; and it has to be
projective (see \cite{Pi}) in order to compute the relative $\Ext$
using projective presentations. Let us define these functors to
then show that they can be quite different.

\begin{proposition} A projective operator ideal $\mathfrak A$ is left-exact.
\end{proposition}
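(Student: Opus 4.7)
The plan is to apply the contravariant functor $\mathfrak{A}(-,V)$ to an arbitrary short exact sequence $0\to Y\overset{i}{\to}X\overset{q}{\to}Z\to 0$ and verify the exactness of
$$0\to\mathfrak{A}(Z,V)\overset{q^{\circ}}{\to}\mathfrak{A}(X,V)\overset{i^{\circ}}{\to}\mathfrak{A}(Y,V)$$
for every Banach space $V$; this is exactly what left-exactness of the induced contravariant Hom-functor amounts to.

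First I would invoke the exactness of the $\mathfrak{L}$-valued analogue: the sequence $0\to\mathfrak{L}(Z,V)\to\mathfrak{L}(X,V)\to\mathfrak{L}(Y,V)$ is exact as an immediate consequence of the universal property of the quotient $q$ together with the surjectivity of $q$. Since $\mathfrak{A}(W,V)$ is a linear subspace of $\mathfrak{L}(W,V)$ for every Banach space $W$, the map $q^{\circ}$ remains injective on $\mathfrak{A}(Z,V)$ and the composition $i^{\circ}\circ q^{\circ}=(qi)^{\circ}$ still vanishes; so only exactness at the middle term $\mathfrak{A}(X,V)$ requires a genuine argument. Given $s\in\mathfrak{A}(X,V)$ with $si=0$, the universal property of $q$ produces a unique $t\in\mathfrak{L}(Z,V)$ such that $s=tq$, and the only remaining task is to show that $t$ itself lies in $\mathfrak{A}$.

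This last step is exactly the content of the hypothesis that $\mathfrak{A}$ is projective in Pietsch's sense (equivalently, surjective as defined in Section 2): whenever a composition $tq$ with $q$ a surjection belongs to $\mathfrak{A}$, the factor $t$ must also belong to $\mathfrak{A}$. Applying this defining property to $tq=s\in\mathfrak{A}(X,V)$ yields $t\in\mathfrak{A}(Z,V)$, which closes the argument. There is no substantial obstacle; the proposition merely records that the defining condition of a projective operator ideal is precisely what is needed to transfer the left-exactness of $\mathfrak{L}(-,V)$ to $\mathfrak{A}(-,V)$. The dual statement for injective ideals and the covariant functor $\mathfrak{A}(V,-)$ is proved in exactly the same way, using the injectivity property of $i$ in place of the surjectivity of $q$, which explains the symmetry alluded to in the discussion preceding the statement.
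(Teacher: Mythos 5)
The paper states this proposition without giving any proof, so there is nothing to compare against; your argument is correct, complete, and is evidently the intended one. You correctly identify that ``projective'' here must be read as the surjectivity property of the ideal from Section 2 (that $tq\in\mathfrak A$ with $q$ a surjection forces $t\in\mathfrak A$), and that this is exactly what is needed to upgrade the factorization $s=tq$ supplied by the universal property of the quotient from $\mathfrak L$ to $\mathfrak A$, the other two exactness conditions being inherited directly from $\mathfrak L({-},V)$.
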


This yields that given a projective operator ideal $\mathfrak A$,
a Banach space $Z$ and an exact sequence $0 \to \ker q \to
\ell_1(\Gamma) \stackrel{q}\to Z \to 0$, and given another Banach
space $Y$ then there exists an exact sequence
$$
0 \longrightarrow \mathfrak A(Z, Y) \longrightarrow\mathfrak
A(\ell_1(\Gamma), Y) \longrightarrow \mathfrak A(\ker q, Y)
\longrightarrow \diamondsuit \longrightarrow 0.$$

Straightforward computations show that $\diamondsuit$ does not depend on
the projective presentation of $Z$, and this allows us to define
$$\Ext_{\mathfrak A}^p(Z,Y) = \diamondsuit.$$

\begin{proposition} An injective operator ideal $\mathfrak A$ is right-exact.
\end{proposition}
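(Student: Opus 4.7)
The statement, to be read in parallel with the preceding proposition on projective ideals, should be understood as asserting that for every Banach space $X$ the covariant functor $\mathfrak A(X, -)$ sends any short exact sequence $0\to Y \stackrel{i}{\to} X' \stackrel{q}{\to} Z \to 0$ to an exact sequence
$$
0 \to \mathfrak A(X, Y) \stackrel{i_{\circ}}{\to} \mathfrak A(X, X') \stackrel{q_{\circ}}{\to} \mathfrak A(X, Z).
$$
This is exactly what is needed so that fixing an injective presentation $0\to Y\to \ell_\infty(\Lambda)\to Q\to 0$ yields a well-defined injectively derived functor $\Ext^i_{\mathfrak A}(X, Y)$, mirroring the role the preceding proposition plays for $\Ext^p_{\mathfrak A}$.

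The plan is a straightforward three-step verification. First, I would observe that $i_{\circ}$ is injective because $i$ is a monomorphism in the category of Banach spaces: if $i\circ t = 0$ then $t = 0$. Second, $q_{\circ}\circ i_{\circ} = (qi)_{\circ} = 0$, since $qi = 0$ in the original exact sequence. The substantive step is exactness at $\mathfrak A(X, X')$: given $s \in \mathfrak A(X, X')$ with $q\circ s = 0$, the image of $s$ is contained in $\ker q = \operatorname{Im} i$, so exactness of the original sequence (equivalently, the open mapping theorem applied to the closed subspace $i(Y)$) yields a unique $t \in \mathfrak L(X, Y)$ with $s = i\circ t$.

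The only point at which the hypothesis is actually invoked --- and therefore the step that deserves care --- is the promotion of $t$ from an element of $\mathfrak L(X, Y)$ to an element of $\mathfrak A(X, Y)$. This is exactly what injectivity of $\mathfrak A$ delivers: by definition, given the injection $i\colon Y\to X'$, an operator $t\in\mathfrak L(X, Y)$ belongs to $\mathfrak A(X, Y)$ if and only if $i\circ t$ belongs to $\mathfrak A(X, X')$. Our hypothesis $s = i\circ t \in \mathfrak A(X, X')$ thus forces $t \in \mathfrak A(X, Y)$, whence $s = i_{\circ}(t)$ lies in the image of $i_{\circ}$. I do not anticipate any deeper obstacle: the argument is the exact dual of the projective case, with the roles of surjectivity of $\mathfrak A$ and factorization through the quotient replaced by injectivity of $\mathfrak A$ and factorization through the kernel.
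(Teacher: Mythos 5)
The paper states this proposition without proof, so there is nothing to compare against line by line; your argument is the standard one and it is correct. You rightly identify the one nontrivial step: given $s\in\mathfrak A(V,X)$ with $qs=0$, the operator $t=i^{-1}s\in\mathfrak L(V,Y)$ exists and is bounded because $i$ is an isomorphism onto its closed image, and it is precisely the injectivity of the ideal (in Pietsch's sense, as recalled in the paper's Section 2: $t\in\mathcal A(V,Y)$ iff $it\in\mathcal A(V,X)$ for an injection $i$) that promotes $t$ to $\mathfrak A(V,Y)$. The other two steps (injectivity of $i_\circ$ and $q_\circ i_\circ=0$) are immediate, as you say.

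Your interpretive choice also deserves confirmation, because it is the only tenable one. Read against the paper's own definition of ``right-exact'' ($\texttt F(Y)\to\texttt F(X)\to\texttt F(Z)\to 0$ exact), the statement applied to the covariant functor $\mathfrak A(V,-)$ would assert that every $\mathfrak A$-operator into $Z$ lifts to an $\mathfrak A$-operator into $X$ --- which is false in general and indeed contradicted by Proposition \ref{bap} for $\mathfrak A=\mathfrak K$. What the subsequent construction of $\Ext^i_{\mathfrak A}$ actually requires is exactness of $0\to\mathfrak A(Z,Y)\to\mathfrak A(Z,\ell_\infty(\Lambda))\to\mathfrak A(Z,Q)$, so that $\diamondsuit$ is the cokernel of the last arrow (note the displayed four-term sequence in the paper has $Y$ and $Q$ transposed); that is exactly the statement you prove. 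So the proof is complete and correctly placed as the dual of the surjective/projective case.
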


This yields that given an injective operator ideal $\mathfrak A$,
a Banach space $Y$ and an exact sequence $0 \to Y \to
\ell_\infty(\Lambda) \to Q \to 0$, and given another Banach space
$Z$ then there exists an exact sequence
$$
0 \longrightarrow \mathfrak A(Z, Q) \longrightarrow \mathfrak A(Z,
\ell_\infty(\Lambda)) \longrightarrow \mathfrak A(Z, Y)
\longrightarrow \diamondsuit \longrightarrow 0.$$Straightforward
computations show that $\diamondsuit$ does not depend on the
injective presentation of $Y$, which allows us to define
$$\Ext_{\mathfrak A}^i(Z,Y) = \diamondsuit.$$

We have thus arrived to the core of the reason behind the
definition of balanced ideal. One has.

\begin{proposition} An injective and surjective
operator ideal $\mathfrak A$ is balanced if projective and
injective derivation coincide; equivalently, if the functors
$\Ext^{p}_{\mathfrak A}(\cdot,\cdot)$ and $\Ext^{i}_{\mathfrak
A}(\cdot,\cdot) $ are naturally equivalent.
\end{proposition}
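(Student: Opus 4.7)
The plan is to translate both clauses in the definition of ``balanced'' into statements about the images of the natural comparison maps $\alpha:\Ext^{p}_{\mathfrak A}(Z,Y)\to\Ext^{1}_{\mathfrak L}(Z,Y)$ and $\beta:\Ext^{i}_{\mathfrak A}(Z,Y)\to\Ext^{1}_{\mathfrak L}(Z,Y)$ induced by the inclusion $\mathfrak A\hookrightarrow\mathfrak L$, and then to use the hypothesis that these two derived functors are naturally equivalent to force the two images to coincide.

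First I would fix the middle row $\xi:0\to Y\to X\to Z\to 0$ of a balanced diagram and view it as the class $[\xi]\in\Ext^{1}_{\mathfrak L}(Z,Y)$. The projective presentation $0\to\ker q\to\ell_1(\Gamma)\overset{q}\to Z\to 0$, together with the definition of $\Ext^{p}_{\mathfrak A}(Z,Y)$ as the cokernel of $\mathfrak A(\ell_1(\Gamma),Y)\to\mathfrak A(\ker q,Y)$ provided by surjectivity of $\mathfrak A$, yields the map $\alpha$. By the push-out description of the top half of the balanced diagram (Lemma~\ref{critopo}), the operator $\varphi$ completing that square can be chosen in $\mathfrak A$ precisely when $[\xi]\in\mathrm{im}\,\alpha$. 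Dually, the injective presentation $0\to Y\to\ell_\infty(\Lambda)\to\ell_\infty(\Lambda)/Y\to 0$ together with injectivity of $\mathfrak A$ gives $\beta$, and the pull-back description of the bottom half shows that $\psi$ can be chosen in $\mathfrak A$ if and only if $[\xi]\in\mathrm{im}\,\beta$.

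With this dictionary in place the conclusion will be immediate: a natural equivalence $\Ext^{p}_{\mathfrak A}\simeq\Ext^{i}_{\mathfrak A}$ compatible with the two comparison maps forces $\mathrm{im}\,\alpha=\mathrm{im}\,\beta$, so that $\varphi$ is choosable in $\mathfrak A$ if and only if $\psi$ is, for every middle row $\xi$, which is the balanced property. Since every commutative diagram of the form appearing in the definition arises this way, the proof will be complete.

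The hard part will be verifying that the postulated natural equivalence is actually compatible with the two comparison maps into $\Ext^{1}_{\mathfrak L}(Z,Y)$. In the classical abelian setting this is automatic from the universal property of derived functors, but in the present relative Banach-space framework I would need to check it by a direct diagram chase, tracking how the push-out produced by $\varphi$ and the pull-back produced by $\psi$ both recover the class $[\xi]$, and hence that $\alpha$ and $\beta$ factor coherently through the equivalence. This compatibility is essentially the content of naturality, but because the ambient category is not abelian it is the one nontrivial check that keeps the argument from being a one-line formality.
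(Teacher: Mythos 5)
The paper states this proposition without proof, treating it as a direct unwinding of the definitions, and your plan reconstructs exactly that unwinding: the dictionary identifying ``$\varphi$ can be chosen in $\mathfrak A$'' with $[\xi]\in\mathrm{im}\,\alpha$ and ``$\psi$ can be chosen in $\mathfrak A$'' with $[\xi]\in\mathrm{im}\,\beta$ is precisely the intended content, so your approach is correct and essentially the paper's. The point you flag as the hard part --- compatibility of the postulated equivalence with the two comparison maps into $\Ext^{1}_{\mathfrak L}(Z,Y)$ --- is genuinely needed, since a bare natural isomorphism of abstract functors would not force $\mathrm{im}\,\alpha=\mathrm{im}\,\beta$; but it is best read as a clarification of the hypothesis rather than a gap in your argument: ``projective and injective derivation coincide'' should be understood as saying that both constructions single out the same classes inside $\Ext^{1}_{\mathfrak L}(Z,Y)$, i.e.\ that the equivalence lives over $\Ext_{\mathfrak L}$, and under that reading your dictionary closes the proof with no further diagram chase.
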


It is easy now to translate the results of the previous section to
this language. For instance, the fact that $\mathfrak K$ is
balanced means that $\Ext_{\mathfrak K}$ is well defined, while
the fact that $\mathfrak W$ is not balanced means that one has to
consider two functors $\Ext_{\mathfrak W}^i$ and $\Ext_{\mathfrak
W}^p$, which can be very different. For instance, the examples in
the previous section  mean:

\begin{enumerate}
\item $\Ext_{\mathfrak W}^i(\ell_2, \mathcal L_\infty(D))\neq 0 =
\Ext_{\mathfrak W}^p(\ell_2, \mathcal L_\infty(D))$.

\item $\Ext_{\mathfrak U}^i(c_0, C[0,1])= 0 \neq \Ext_{\mathfrak
U}^p(c_0, C[0,1])$.

\item $\Ext_{\mathfrak X}^i(\ell_2(\Gamma), c_0)= 0 \neq
\Ext_{\mathfrak U}^p(\ell_2(\Gamma), c_0)$.

\item $\Ext_{\mathfrak R}^i(c_0, \mathcal L_\infty(\ker q))\neq 0
= \Ext_{\mathfrak R}^p(c_0, \mathcal L_\infty(\ker q))$.

\end{enumerate}


\begin{thebibliography}{CCKY}
\bibitem{A}  R. Aron, \textit{Extensions and lifting theorem for
analytic mappings}, In: Functional Analysis: survey and recent
results. II. K. D. Bierstedt and B. Fuchssteiner, eds., North
Holland ath. Studies. \emph{38 } (1980), 257--267.

\bibitem{AB}  R. Aron and P. Berner, \textit{A Hahn-Banach
extensions theorem for analytic mappings}, Bull. Soc. Math. France
\emph{106} (1978), 3--24.

\bibitem{AS} R. Aron and M. Schottenloher, \textit{Compact holomorphic mappings on Banach spaces and the
approximation property}, J. Funct. Anal. \emph{21} (1976), 7-30.

\bibitem{B}  K.D. Bierstedt, \textit{The $\varepsilon$-(tensor) product of (DFS)-space with arbitrary Banach spaces},
pp. $3551$ Functional Analysis (Proceedings of the Trier Work-shop,
1994), de Gruyter, Berlin, 1996, 35-51.

\bibitem{BPR} G. Botelho, D. Pellegrino, and P. Rueda, \textit{On Composition Ideals of Multilinear Mappings
and Homogeneous Polynomials}, Publ. RIMS, Kyoto Univ. \emph{43}
(2007), 1139-1155.
\bibitem{BD} J.  Bourgain  and F. Delbaen, \emph{A class of special
${\mathcal L}_\infty$-spaces}, Acta Math. \emph{145} (1980),
155-176.


\bibitem{BP} J.  Bourgain  and G. Pisier, {\it A construction of
${\mathcal L}_\infty$-spaces and related Banach spaces}, Bol. Soc.
Brasil. Mat. \emph{14} (1983), 109--123.


\bibitem{CC} F. Cabello S\'anchez and J.M.F. Castillo, \textit{The long homology
sequence for quasi-Banach spaces, with applications}, Positivity
\emph{8} (2004), 379--394.

\bibitem{CCKY}  F. Cabello S\'{a}nchez, J.M.F. Castillo, N. J. Kalton, and D.T. Yost, \textit{Twisted sums with} $C(K)$\textit{-spaces}, Trans. Amer. Math. Soc. \emph{355}
(2003), 4523--4541.


\bibitem{ccy} F. Cabello S\'anchez, J.M.F. Castillo, and D. Yost, \emph{Sobczyk's
theorems from A to B}, Extracta Math. \emph{15} (2000) 391--420.


\bibitem{CGV}  F. Cabello S\'{a}nchez, R. Garc\'{i}a, and I.
Villanueva, \textit{Extensions of multilinear operator on Banach
spaces}, Extracta Math. \emph{15} (2000), 291--334.



\bibitem{CL} D. Carando and S. Lassalle, \textit{$E^{\prime }$ and it's
relation with vector-valued functions on $E$}, Ark. Mat. \emph{42}
(2004), 283--300.

\bibitem{CG} J.M.F. Castillo and M. Gonz\'{a}les, \textit{
Three-space problems in Banach spaces}, Lecture Notes in Mathematics 1667, Springer-Verlag (1997).

\bibitem{CDGPS} J.M.F. Castillo, A. Defant, R. Garc\'{\i}a, D. P\'erez-Garc\'{\i}a, J.
Su\'arez, \emph{ Local complementation and  the extension of
bilinear mappings}, Math. Proc. Cambridge Philos. Soc. (to
appear).

\bibitem{CiG}  R. Cilia and J. M. Guti\'{e}rrez, \textit{Extension and lifting of weakly continuous polynomials}, Studia Math. \emph{169} (3)(2005), 229--241.

\bibitem{DF}  A. Defant A. and K. Floret, \textit{Tensor Norms and Operator Ideals}, North-Holland Math. Studies 176.

\bibitem{djt} J. Diestel, H. Jarchow and A. Tonge,
\emph{Absolutely summing operators}, Cambridge Studies in Advanced
Mathematics, 43, Cambridge University Press, 1995.



\bibitem{D} S. Dineen, \textit{Complex Analysis on Infinite
Dimensional Spaces}, Monographs in Math., Springer, Berlin 1999.

\bibitem{dom} P. Domanski, \emph{Ideals of extendable and liftable operators}, RACSAM \emph{97} (2003), 229--241 .
\bibitem{Fa} H. Fakhoury, \textit{Selections lineaires associees au Theoreme de Hahn-Banach}, J. Fund. Anal. \emph{11} (1972), 436-452.
\bibitem{F} K. Floret, \textit{Natural norms on symmetric tensor products of normed spaces}, Note Mat.
\emph{17} (1997), 153--188.

\bibitem{GJLl} R. Garc\'ia, J. A. Jaramillo, and J. G. Llavona , \textit{The Aron-Berner extension, Goldstine's theorem and P-continuity},
Math. Nachr.  \emph{284} (2011), 694--702.

\bibitem{GG}  M. Gonz\'{a}lez and J. M. Guti\'{e}rrez, \textit{
Extension and lifting of polynomials}, Arch. Math. \emph{81}
(2003), 431--438.
\bibitem{GG1} M. Gonz\'{a}lez and J. M. Guti\'{e}rrez, \textit{Surjective factorization of holomorphic mappings}, Comment. Math. Univ. Carolin. \emph{41} (2000) 469--476.

\bibitem{GG2} M. Gonz\'{a}lez and J. M. Guti\'{e}rrez, \textit{Injective factorization of holomorphic mappings}, Proc. Amer. Math. Soc. \emph{127} (1999), no. 6, 1715-1721.

\bibitem{HS} E. Hilton and K. Stammbach,
A course in homological algebra, Graduate Texts in Mathematics 4,
Springer-Verlag 1970.


\bibitem{JL} W.B. Johnson and J. Lindenstrauss.
\emph{Some remarks on weakly compactly generated Banach spaces.}
Israel J. Math. \emph{17} (1974), 219--230.

\bibitem{Ka}  W. Kaballo, \textit{Lifting theorems for vector valued functions and the $\epsilon $-tensor products}, K. D. Bierstedt, B. Fuchssteiner (eds.) Functional Analysis: Survey and Recent Resuts, North-Holland Publ. Comany (1977), 149--166.

\bibitem{K} N.J. Kalton, \textit{Locally complemented subspaces
and} $\mathcal{L}_{p}$-\textit{spaces for} $0<p<1$, Math. Nachr.
\emph{115} (1984), 71--97.

\bibitem{KA} N.J. Kalton, \emph{On subspaces of $c_0$ and extension of operators into $C(K)$-spaces}, Q. J. Math. Oxford \emph{52} (2001), 313-328.


\bibitem{L} J. Lindenstrauss, \textit{On the extension of compact operators,} Mem. Amer. Math. Soc. AMS \emph{48} (1964).

\bibitem{LR} J. Lindenstrauss and H.P. Rosenthal, \textit{The
$\mathcal{L}_{p}$-spaces}, Israel J. Math. \emph{7} (1969),
325--349.

\bibitem{LT} J. Lindenstraus and L. Tzafriri, \textit{
Classical Banach spaces I}, Springer-Verlag, Berlin, 92, 1977.

\bibitem{Pi}  A. Pietsch, \textit{Operator Ideal}, Dtsch.
Veralg d. Wissenschaften Berlin 1978, and North Holland Math. Library, 1980.

\bibitem{Pi1} A. Pietsch, \textit{Ideals of multilinear functionals}, Proceedings of the Second International
Conference on Operator Algebras, Ideals and Their Applications in Theoretical Physics,
185?199, Teubner-Texte, Leipzig, 1983.

\bibitem{Ry} R. A. Ryan, \textit{Applications of topological tensor products to infnite dimensional
holomorphy}, Ph. D. Thesis, Trinity College, Dublin 1980.
\bibitem{Ry1} R. Ryan, \textit{ Introduction to Tensor Products of Banach Spaces}, Springer, London , 2002.

\bibitem{So} A. Sobczyk, \textit {On the extension of linear transformations}, Transactions of the American Mathematical Society \emph{55} (1944) 153-169.

\bibitem{St}  E.W. Straeuli, \textit{On Hahn-Banach extension
for certain operator ideals}, Arch. Math. (Basel) \emph{47}, no. 1
(1986), 49--54.

\bibitem{Z}  I. Zalduendo, \textit{A canonical extension for
analytic functi\'ons on Banch spaces}, Trans. Amer. Math. Soc.
\emph{320 }(1990), 747--763.

\end{thebibliography}
\end{document}